\newcommand{\rar}{\rightarrow}
\newcommand{\lar}{\longrightarrow}
\newcommand{\llar}{-\kern-5pt-\kern-5pt\longrightarrow}
\newcommand{\lllar}{-\kern-5pt-\kern-5pt\llar}
\newtheorem{Theorem}{Theorem}[section]
\newtheorem{Corollary}[Theorem]{Corollary}
\newtheorem{Proposition}[Theorem]{Proposition}
\newtheorem{Remark}[Theorem]{Remark}
\newtheorem{Example}[Theorem]{Example}
\def\ann{\mbox{\rm ann}}
\def\depth{\mbox{\rm depth }}
\def\ds{\displaystyle}
\def\gr{\mbox{\rm gr}}
\def\red{\mbox{\rm r}}
\def\hdeg{\mbox{\rm hdeg}}
\def\grade{\mbox{\rm grade}}
\def\Hom{\mbox{\rm Hom}}
\def\ker{\mbox{\rm ker}}
\def\m{\mathfrak{m}}
\def\AA{{\mathbf A}}
\def\BB{{\mathbf B}}
\def\SS{{\mathbb S}}
\def\RR{{\mathbf R}}
\def\TT{{\mathbf T}}
\def\ff{{\mathbf f}}
\def\g2{{\mathbf g}}
\def\H{{\mathrm H}}
\def\rme{{\mathrm e}}
\def\s{{\mathrm s}}
\def\red{{\mathrm r}}
\def\m{{\mathfrak m}}
\def\p{{\mathfrak p}}
\begin{document}

\title{Sally modules and reduction numbers of ideals}
\thanks{AMS 2010 {\em Mathematics Subject Classification}.
Primary 13H10;  Secondary 13A30, 13H15, 13D40.\\
{\bf  Key Words and Phrases:}  Buchsbaum ring, Cohen-Macaulay ring, Hilbert function, reduction number of ideal, Rees algebra, Sally module of ideal, $S_2$-fication.  \\
The first author was partially supported by the Fellowship Leave from the New York City College of Technology-CUNY (Fall 2014 - Spring 2015) and by a grant from the City University of New York PSC-CUNY Research Award Program-45.}

\author{L. Ghezzi} \address{Department of Mathematics, New York City
College of Technology-Cuny, 300 Jay Street, Brooklyn, NY 11201, U.S.A.} \email{lghezzi@citytech.cuny.edu}
\author{S. Goto}
\address{Department of Mathematics, School of Science and Technology,
Meiji University, 1-1-1 Higashi-mita, Tama-ku, Kawasaki 214-8571,
Japan} \email{goto@math.meiji.ac.jp}
\author{J. Hong}
\address{Department of Mathematics, Southern Connecticut State
University, 501 Crescent Street, New Haven, CT 06515-1533, U.S.A.}
\email{hongj2@southernct.edu}
\author{W. V. Vasconcelos}
\address{Department of Mathematics, Rutgers University, 110
Frelinghuysen Rd, Piscataway, NJ 08854-8019, U.S.A.}
\email{vasconce@math.rutgers.edu}

\date {\today}

\begin{abstract}
\noindent
We study the relationship between the reduction number of a primary ideal of a local ring relative to one of its minimal reductions
and the multiplicity of the  corresponding Sally module.    
This paper is focused on three goals: (i) To develop a change of rings technique for the Sally module of an ideal to allow extension of results from Cohen-Macaulay rings to more general rings. (ii) To use the fiber of the Sally modules of almost complete intersection ideals to connect 
its structure to the Cohen-Macaulayness of the special fiber ring. (iii) To extend some of the results of (i) to two-dimensional Buchsbaum rings. Along the way we provide an explicit realization  of the $S_2$-fication of arbitrary Buchsbaum rings.

\end{abstract}

\maketitle


\section{Introduction}

\noindent
Let $(\RR, \m)$ be a Noetherian local ring of dimension $d\geq 1$.  We say that $\RR$ is {\em almost} Cohen-Macaulay, or briefly, aCM if $\depth \RR \geq d-1$. Numerous classes of such rings occur among Rees algebras of ideals of Cohen-Macaulay rings (several classical papers of Sally, Valla and others; see the surveys \cite{RV10}, \cite{Sal78}). They also arise in the normalization of integral domains of low dimension and  among rings of invariants. Obviously information about the aCM condition rests in
the cohomology of the algebra which often is not fully available. As a corrective we look for chunks of those data that may be
available in the reduction number of the ideal, Hilbert coefficients, structure of the Sally module and in the relationships
among these invariants.  
\medskip

The simplest to describe of these notions is that of reductions of ideals, but we will only make use of a special case. 
 If $I$ is an $\m$-primary ideal, a minimal reduction is an ideal
$Q = (a_1, \ldots, a_d) \subset I$ such that $I^{n+1} = QI^n$ for some integer $n\geq 0$. The smallest such integer is denoted
$\red_Q(I)$ and called the {\em reduction number of $I$ relative to $Q$}. We set $\red(I) = \inf\{\red_Q(I) \mid Q \;\mbox{is a minimal reduction of} \; I\}$.
A set of questions revolve 
around the relationship between the Hilbert coefficients of $I$ and its reduction number. 
In turn, the mentioned coefficients are those extracted from 
   the Hilbert function of $I$, that is 
of the associated graded ring $\gr_I(\RR) = \bigoplus_{n\geq 0} I^n/I^{n+1}$, more precisely from some of the coefficients of its Hilbert polynomial
\[ \lambda(I^n/I^{n+1}) =
 \rme_0(I){{n+d-1}\choose{d-1}} -  \rme_1(I) {{n+d-2}\choose{d-2}} + \textrm{\rm lower terms}, \quad n\gg 0.\]

An instance of the relationships between these invariants of $I$ is the following. 
   If $\RR$ is Cohen-Macaulay 
we have (\cite[Theorem 2.45]{icbook})
\[ \red(I) \leq {\frac{d\cdot \rme_0(I)}{o(I)}} -2d + 1,\] where $\rme_0(I)$ is the multiplicity of $I$ and 
$o(I)$ its order ideal, that is the smallest positive integer $n$ such that $I\subset \m^n$. A non-Cohen--Macaulay version of the similar character is given in \cite[Theorem 3.3]{chern4} with $\rme_0(I)$ replaced by $\lambda(\RR/Q)$ for any reduction $Q$ of $I$. Since
$\lambda(\RR/Q) = \rme_0(I) + \chi_1(Q)\leq \hdeg_I(\RR)$, by Serre's Theorem (\cite[Theorem 4.7.10]{BH}) and \cite[Theorem 7.2]{chern5} respectively, the reduction number $\red(I)$ can be bounded in terms of $I$ alone. It is worth asking whether a sharper bound holds 
with replacing $\lambda(\RR/Q)$ by $\rme_0(I) - \rme_1(Q)$ (see \cite[Theorem 4.2]{chern7}).

\medskip

A distinct kind of bound was introduced by M. E. Rossi \cite{Rossi00}.

\begin{Theorem}\label{Rossibound}{\rm (\cite[Corollary 1.5]{Rossi00})}
 If $(\RR,\m)$ is a Cohen-Macaulay local ring of dimension at most $2$, then for any $\m$-primary ideal $I$ with a minimal reduction $Q$,
\[ \red_{Q}(I) \leq \rme_1(I) - \rme_0(I)+ \lambda(\RR/I) + 1.\]
\end{Theorem}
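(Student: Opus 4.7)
The plan is to handle the dimensions $d=1$ and $d=2$ in turn, reducing the latter to the former by passing modulo a superficial element of $I$. For $d=1$, I would write $Q=(a)$ with $a$ a nonzerodivisor and set $v_n := \lambda(I^{n+1}/aI^n)$. A short computation using the exact sequence $0 \to I^{n+1}/aI^n \to \RR/aI^n \to \RR/I^{n+1} \to 0$ together with the identification $(a)/aI^n \cong \RR/I^n$ and $\lambda(\RR/a\RR) = \rme_0(I)$ would give $\lambda(I^n/I^{n+1}) = \rme_0(I) - v_n$; matching this against the Hilbert polynomial $\rme_0(I)(n+1) - \rme_1(I)$ for $n\gg 0$ would yield the identity $\rme_1(I) = \sum_{n\ge 0} v_n$. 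Since $v_0 = \lambda(I/Q) = \rme_0(I) - \lambda(\RR/I)$, and $v_n \ge 1$ for every $0 \le n < \red_Q(I)$ by the very definition of the reduction number, the claim would follow from
\[
\rme_1(I) - \rme_0(I) + \lambda(\RR/I) \;=\; \sum_{n \ge 1} v_n \;\ge\; \red_Q(I) - 1.
\]

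For $d=2$, I would choose a superficial element $x \in I$ belonging to a minimal generating set $\{x,y\}$ of $Q$, and set $\overline{\RR} = \RR/(x)$, $\overline{I} = I\overline{\RR}$, $\overline{Q} = \overline{y}\,\overline{\RR}$. Standard superficial-element bookkeeping in the Cohen--Macaulay setting gives $\rme_i(\overline{I}) = \rme_i(I)$ for $i=0,1$, and $\lambda(\overline{\RR}/\overline{I}) = \lambda(\RR/I)$ because $x \in I$. Applying the one-dimensional bound to $\overline{I}$ then gives
\[
\red_{\overline{Q}}(\overline{I}) \;\le\; \rme_1(I) - \rme_0(I) + \lambda(\RR/I) + 1,
\]
so the only remaining task is to promote this to a bound on $\red_Q(I)$ itself.

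That promotion is the technical heart of the argument. The inequality $\red_{\overline{Q}}(\overline{I}) \le \red_Q(I)$ is automatic, but to lift an equality $\overline{I}^{n+1} = \overline{Q}\,\overline{I}^n$ back to $I^{n+1} = QI^n$ one needs the Valabrega--Valla identity $(x) \cap I^{n+1} = xI^n$ at the relevant degree, and this holds for every $n$ precisely when the initial form $x^*$ is a nonzerodivisor on $\gr_I(\RR)$. To secure it I would choose $x$ sufficiently general among superficial elements when $\depth \gr_I(\RR) \ge 1$; and when $\depth \gr_I(\RR) = 0$, I would first replace $I$ by its Ratliff--Rush closure $\widetilde{I} = \bigcup_{n \ge 1}(I^{n+1}:I^n)$, which preserves all Hilbert coefficients and satisfies $\depth \gr_{\widetilde{I}}(\RR) \ge 1$, prove the bound for $\widetilde{I}$, and then transfer it back to $I$. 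Controlling the interplay between $\lambda(\RR/\widetilde{I})$, $\red_Q(\widetilde{I})$, and their $I$-counterparts during this descent is where the delicate estimates live.
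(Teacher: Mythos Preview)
The paper does not itself prove this statement; Theorem~\ref{Rossibound} is quoted from \cite{Rossi00} and later invoked as a black box (for instance in the proof of Theorem~\ref{Rossi4Buch}). There is therefore no in-paper argument to compare against, and what follows addresses your sketch on its own terms.

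Your one-dimensional case is correct and is the classical argument. In dimension two, however, the outline has a genuine gap. The claim that replacing $I$ by its Ratliff--Rush closure $\widetilde I$ forces $\depth \gr_{\widetilde I}(\RR)\ge 1$ is false in general: one may have $\widetilde I=I$ (that is, $I$ already Ratliff--Rush closed) while $\depth \gr_I(\RR)=0$, since Ratliff--Rush closedness of $I$ does not force $\widetilde{I^n}=I^n$ for all $n$. What \emph{does} have positive depth is the associated graded ring of the Ratliff--Rush \emph{filtration} $\{\widetilde{I^n}\}_{n\ge 0}$, and because $(\widetilde I)^n\subsetneq\widetilde{I^n}$ in general this is a different object; Rossi's argument works with that filtration, not with powers of a single ideal. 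Even after this correction the ``descent'' you flag is not a formality: an equality $\widetilde{I^{n+1}}=Q\,\widetilde{I^n}$ does not by itself yield $I^{n+1}=QI^n$, and there is no a priori inequality between $\red_Q(I)$ and any reduction number attached to the filtration. Rossi closes this with a separate lemma bounding $\red_Q(I)$ directly in terms of the lengths $\lambda(\widetilde{I^{j+1}}/Q\,\widetilde{I^j})$, whose sum she then identifies with $\rme_1(I)-\rme_0(I)+\lambda(\RR/I)$ via the positive depth of the Ratliff--Rush graded ring. Your plan, as written, supplies no substitute for this step, and the superficial-element reduction cannot be completed without it.
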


  We refer to the right hand side as the Rossi index of $I$.
The presence of the offsetting terms  at times leads to smaller bounds.  Before we explain how such bounds arise, it
is worthwhile to point out that it has been observed in numerous cases in higher dimension. For instance, if the Rossi index is equal to $2$ then 
equality holds in all dimensions (\cite[Theorem  3.1]{GNO}). Another case is that of all the ideals  of $k[x,y,z]$ of codimension $3$ generated by $5$ quadrics (\cite[Theorem 2.1 and Proposition 2.4]{abc}).

\medskip

We will prove some specialized higher dimension versions of this inequality and also  
propose a version of it for non-Cohen-Macaulay rings. These will require a natural explanation for these sums as the multiplicities of certain 
 modules that occur in the theory of Rees algebras. At the center of our discussion is the Sally module (\cite[Definition 2.1]{SallyMOD})
 \[ S_Q(I) = \bigoplus_{n\geq 1} I^{n+1}/IQ^{n}\] 
 of $I$ relative to $Q$.  Its properties depend heavily  on the value of $\red_Q(I)$ and of its multiplicity, which in the Cohen-Macaulay
 case is given by $\rme_1(I) - \rme_0(I) + \lambda(\RR/I)$ (\cite[Corollary 3.3]{SallyMOD}).

\medskip

In Theorem~\ref{fiberofSally} we give a quick presentation of some of the main properties of $S_Q(I)$. Making use of a theorem
of Serre   (\cite[Theorem 4.7.10]{BH}), we provide proofs that extend the assertions in \cite{SallyMOD} from the Cohen-Macaulay case
to general rings with some provisos. Its use   requires the knowledge of the dimension  of $S_Q(I)$ (which is readily 
accessible if $\RR$ is Cohen-Macaulay), for which we appeal
to  the assertion of Theorem~\ref{fiberofSally}(3), 
that if
 $\dim S_Q(I) = d$, then  
 $\s_{Q}(I) \leq \sum_{n=1}^{r-1} \lambda(I^{n+1}/QI^{n})$, where $r = \red_Q(I)$ and $\s_{Q}(I)$ is an appropriate multiplicity of $S_Q(I)$. Furthermore,
$S_Q(I)$ is Cohen--Macaulay if and only if equality holds. 
This is a direct extension of
 \cite[Theorem 3.1]{Huc96} (see also \cite[Theorem 4.7]{HM97}) if $\RR$ is Cohen--Macaulay. In these cases the Rossi index of $I$ is $\s_{Q}(I) + 1$ and  $\red_Q(I) \leq \s_{Q}(I) + 1$. 
 
  \medskip

We want to take as our starting point the value of $\s_{Q}(I)$. 
We first give a change of rings result (Theorem~\ref{changeSally}) to detect
when $\dim S_Q(I) < d$ whenever we have a finite integral extension of $\RR$ into a Cohen-Macaulay ring $\SS$. 
It will be used in Section 4 when we prove a version of Theorem~\ref{Rossibound} for Buchsbaum rings
(Theorem~\ref{Rossi4Buch}).
\medskip

Section 3 deals exclusively with almost complete intersections and derives reduction number bounds from the behavior of one component of the fiber of
$S_Q(I)$. For instance, if $(\RR, \m)$ is a Noetherian local ring, $I$ is an $\m$--primary ideal and $Q$ is one of its minimal reductions, then if
 $\lambda(I^n/QI^{n-1})=1$ for some integer $n>0$, we have that $\red(I) \leq n \nu(\m) -1$ (Proposition~\ref{redlaq2a}).

\medskip

One of our main results (Theorem~\ref{redaciex}) gives a description of the properties of the special fiber ring, including a formula for its multiplicity $f_0(I)$.

\medskip

\noindent {\bf Theorem 3.7 }{\it Let $(\RR, \m)$ be a Cohen-Macaulay local ring of dimension $d\geq 1$ with infinite residue field and let $I$ be an $\m$-primary ideal which is an almost complete
intersection. If for some minimal reduction $Q$ of $I$, $\lambda(I^2/QI) = 1$, then
\begin{enumerate}
\item[{\rm (1)}] The special fiber ring $\mathcal{F}(I) $ is Cohen-Macaulay.
\item[{\rm (2)}] $ f_0(I)=\red(I) + 1= \rme_1(I) - \rme_0(I) + \lambda(\RR/I) + 2$.
\end{enumerate}
}

\noindent This is the case if $\RR$ is Gorenstein and $\lambda(I/Q) = 2$.
  
 \medskip
  
An issue is whether there are extensions of these formulas for more general classes 
of ideals of  Cohen-Macaulay rings, or even non-Cohen-Macaulay rings of dimension $2$.
With regard to the second question, in Section 4 we consider extensions of Theorem~\ref{Rossibound} to Buchsbaum rings. We begin by 
proving that  any Buchsbaum local ring $\RR$ admits a $S_2$-fication $\ff: \RR \rar \SS$. A surprising 
  fact is the intrinsic construction of $\SS$, independent of the existence of a canonical module for $\RR$ (Theorem~\ref{S2ofBuch}).
We make use of this morphism to derive in Theorem~\ref{Rossi4Buch}, 
a bound for the reduction numbers of the ideals of $\RR$, in dimension two, in terms of
the multiplicity of the Sally module $S_Q(I)$ and the Buchsbaum invariant of $\RR$. 

\medskip


\section{Sally modules and Rossi index}

\noindent
Let $(\RR, \m)$ be a Noetherian local ring of dimension $d\geq 1$, $I$ an $\m$--primary ideal and $Q$ a minimal reduction of $I$.  Let $r= \red_Q(I)$ be the reduction number of $I$ relative to $Q$.
Let us recall the construction of the Sally module associated to these data and show how some of its properties, originally
stated for Cohen--Macaulay rings, hold in greater generality. We denote the Rees algebras of $Q$ and $I$ by $\RR[Q\TT]$ and $\RR[I\TT]$ respectively  and consider the exact sequence of finitely generated $\RR[Q\TT]$-modules  
\[ 0 \rar I \RR[Q\TT] \lar I \RR[I\TT] \lar S_Q(I) \rar 0. \]
Then $S=S_Q(I) = \bigoplus_{n\geq 1} I^{n+1}/IQ^{n}$ is the Sally module of $I$ relative to $Q$(\cite[Definition 2.1]{SallyMOD}). 
Note that \[ S/(Q\TT)S =   \bigoplus_{n= 1}^{r-1} I^{n+1}/QI^{n}\] 
is a finitely generated $\RR$--module. We refer to the component $I^{n+1}/QI^{n}$ as the degree $n$ fiber of $S_{Q}(I)$. Note that for $1 \leq n \leq r-1$ these components do not vanish.
If $\dim S_Q(I) = d$, and $Q = (a_1, \ldots, a_d)$, then the elements $\{a_1 \TT, \ldots, a_d \TT\}$ give a system of parameters for 
$S_Q(I)$. We denote by $\s_{Q}(I)=\rme_0((Q \TT), S)$ 
 the corresponding multiplicity. Its value is independent of $Q$ if $\RR$ is Cohen-Macaulay. 
Let us summarize some results of \cite{BH}, \cite{GNO08}, \cite{Huc96}, \cite{SallyMOD} into the following, which we often need in this paper.
 
 \begin{Theorem} \label{fiberofSally} Let $(\RR,\m)$ be a Noetherian local ring of dimension $d>0$. Let $I$ be an $\m$-primary ideal and $Q$  a minimal reduction of $I$ with reduction number $r=\red_{Q}(I)$. 
 \begin{enumerate}
 \item[{\rm (1)}] The Sally module $S_Q(I)$ of $I$ relative to $Q$ is a finitely generated module over $\RR[Q\TT]$ of dimension at most $d$. 
 \item[{\rm (2)}] If $\RR$ is Cohen-Macaulay, then $S_Q(I)$ is either
 $0$ or has dimension $d$. In the latter case $\m \RR[Q\TT]$ is its only associated prime and  its multiplicity {\rm(}or rank{\rm)} is
 \[ \s_{Q}(I) = \rme_1(I) - \rme_0(I) + \lambda(\RR/I).\] 
 \item[{\rm (3)}] 
 If $\dim S_Q(I) = d$, then  
\begin{itemize}
\item[{\rm (a)}] $\s_{Q}(I) \leq \sum_{n=1}^{r-1} \lambda(I^{n+1}/QI^{n})$.
\item[{\rm (b)}] The equality in {\rm (a)} 
 holds if and only if $S_Q(I)$ is Cohen-Macaulay, in which case $\red_Q(I) \leq s_{Q}(I) +1$. 
\item[{\rm (c)}] If $\RR$ is Cohen-Macaulay, the equality in {\rm (a)}   holds if and only if $\depth \gr_I(\RR)\geq d- 1$.
  \end{itemize}
   \end{enumerate}

 \end{Theorem}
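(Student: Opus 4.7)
My plan is to treat $S := S_Q(I)$ as a finitely generated graded $\RR[Q\TT]$-module and handle the three parts in sequence.

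For (1), finite generation comes from integrality: since $Q$ is a reduction of $I$, $\RR[I\TT]$ is module-finite over $\RR[Q\TT]$, so $I\RR[I\TT]$ and its quotient $S$ are finitely generated. For the dimension bound, I would use that $I^{n+1}=QI^n$ for $n \geq r$ forces $(Q\TT)\cdot S_{n-1} = QI^n/IQ^n = S_n$, hence $S$ is generated in degrees $1,\ldots,r-1$. Each graded piece $I^{k+1}/IQ^k$ has finite length, so some power $\m^N$ annihilates every generator and therefore all of $S$. Hence $\m\RR[Q\TT] \subseteq \sqrt{\ann(S)}$, and since $a_1,\ldots,a_d$ are analytically independent, $\dim \RR[Q\TT]/\m\RR[Q\TT] = d$, giving $\dim S \leq d$.

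For (2), assuming $\RR$ is Cohen-Macaulay, I would compute the Hilbert polynomial of $S$ directly. Using $IQ^n \subseteq I^{n+1}$, one has $\lambda(S_n) = \lambda(\RR/IQ^n) - \lambda(\RR/I^{n+1})$. By Serre's freeness result (\cite[Theorem 4.7.10]{BH}), $Q^n/Q^{n+1}$ is $\RR/Q$-free of rank $\binom{n+d-1}{d-1}$, yielding
\[
\lambda(\RR/IQ^n) = \rme_0(I)\binom{n+d}{d} - \lambda(I/Q)\binom{n+d-1}{d-1} + (\text{lower terms}).
\]
Subtracting the Hilbert-Samuel polynomial of $I$ and using $\lambda(I/Q) = \rme_0(I) - \lambda(\RR/I)$ produces
\[
\lambda(S_n) = \bigl(\rme_1(I) - \rme_0(I) + \lambda(\RR/I)\bigr)\binom{n+d-1}{d-1} + (\text{lower terms}),
\]
pinning down both the dimension and the multiplicity. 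The claim that $\m\RR[Q\TT]$ is the unique associated prime then follows from the argument in \cite[Proposition 2.2]{SallyMOD}.

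For (3), when $\dim S = d$ the elements $a_1\TT,\ldots,a_d\TT$ form a system of parameters for $S$. A degreewise computation gives $((Q\TT)S)_n = Q\cdot S_{n-1} = QI^n/IQ^n$, so
\[
S/(Q\TT)S = \bigoplus_{n \geq 1} I^{n+1}/QI^n,
\]
and this vanishes for $n \geq r$. Serre's multiplicity inequality (\cite[Theorem 4.7.10]{BH}) then yields $\s_Q(I) \leq \sum_{n=1}^{r-1}\lambda(I^{n+1}/QI^n)$, with equality exactly when $a_1\TT,\ldots,a_d\TT$ is a regular sequence on $S$, i.e.\ $S$ is Cohen-Macaulay. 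Under equality, each $I^{n+1}/QI^n$ with $1 \leq n \leq r-1$ is nonzero (otherwise $r$ would not be the reduction number), so $\lambda(S/(Q\TT)S)\geq r-1$, giving $\red_Q(I) \leq \s_Q(I)+1$. For (3)(c), when $\RR$ is Cohen-Macaulay, the Huckaba-Marley exact sequence linking $S$, $\gr_I(\RR)$ and $\gr_Q(\RR)\otimes_{\RR} \RR/I$ (see \cite[Theorem 4.7]{HM97}) transfers Cohen-Macaulayness of $S$ into $\depth \gr_I(\RR) \geq d-1$. The main obstacles are the dimension bound in (1) without a Cohen-Macaulay hypothesis, and the depth-transfer of (3)(c).
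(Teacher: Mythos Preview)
Your proof follows the paper's approach closely: part (1) via $\m^N S = 0$ (the paper cites \cite[Lemma 2.1]{GNO08} for exactly this), part (2) by the Hilbert polynomial computation underlying \cite[Proposition 2.2, Corollary 3.3]{SallyMOD} (which the paper simply cites), and part (3) via Serre's theorem \cite[Theorem 4.7.10]{BH} with the identical equality-iff-regular-sequence characterization and the same reference \cite{Huc96}/\cite{HM97} for (3)(c). One correction: in (2), the freeness $Q^n/Q^{n+1}\cong(\RR/Q)^{\binom{n+d-1}{d-1}}$ holds because $Q$ is generated by a regular sequence in the Cohen--Macaulay ring $\RR$ (so $\gr_Q(\RR)$ is a polynomial ring over $\RR/Q$), not by \cite[Theorem 4.7.10]{BH}, which is the Euler-characteristic identity you correctly invoke in (3).
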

 
 \begin{proof} Let $S=S_{Q}(I)$. Since $\m^{l} S =0$ for some integer $l>0$ (\cite[Lemma 2.1]{GNO08}), $\dim_{\RR[Q\TT]}(S) \leq d$. The assertion (2) is proved in \cite[Proposition 2.2, Corollary 3.3]{SallyMOD}. Now suppose that $\dim S_Q(I) = d$. Then by Serre's theorem (\cite[Theorem 4.7.10]{BH}),
 \[ \lambda (S/(Q\TT) S) = \chi_0(Q\TT, S) + \chi_1(Q\TT, S) = \s_{Q}(I) + \chi_1(Q\TT, S),\]
 and since $\chi_1(Q\TT , S)\geq 0$,  the equality in (a) holds if and only if $Q\TT$ yields a regular sequence on $S$. 
 This proves the assertions (3)(a) and (3)(b). Assertion (3)(c) follows from \cite[Theorem 3.1]{Huc96}.
 \end{proof}

 \begin{Remark} \label{fiberofSallyrmk}{\rm (see \cite[Lemma 3.8]{CPVP})
We make several observations about  the Sally fiber $S/(Q\TT)S$.
A crude estimation for the length of the fiber of $S$ is $\lambda(S/(Q\TT)S) \geq r-1$. Refinements require information about the lengths of the modules $I^{n+1}/QI^{n}$.
Let us consider a very extreme case. Suppose that $I^{n+1}/QI^{n}$ is cyclic for some $n\geq 1$. Write $I^{n+1}/QI^{n}\simeq \RR/L_n$, where $L_n$ is an ideal of $\RR$. We claim that
 $I^{n+2}/QI^{n+1}\simeq \RR/L_{n+1}$, $L_n \subset L_{n+1}$. For notational simplicity we pick $n=1$ but the argument applies to all cases.
 We have $I^2 = (ab)+ QI$, $a,b\in I$. We claim that $I^3 = (a^2b) + QI^2$. If $c,d,e\in I$ we have $cd \in (ab)+ QI$, and thus
 $cde \in (abe)+ QI^2\subset a(be) + QI^2 \subset (a^2b)+ QI^2$. We also have $L_1(a^2b) = aL_1(ab) \subset QI^2$. 
 This gives the following exact value for the length of the fiber of $S$: If $\lambda(I^2/QI) = 1$, then $\lambda(S/(Q\TT)S) = r-1$.
}\end{Remark}

\medskip

We recall from the literature several properties of ideals $I$ such that $\lambda(I^2/QI) = 1$.

\begin{Proposition} \label{lambda1}Let $(\RR,\m)$ be a Cohen-Macaulay local ring of dimension $d>0$. Let $I$ be an $\m$-primary ideal and $Q$ a minimal reduction of $I$ such that $\lambda(I^2/QI) = 1$.
Then the following hold:
\begin{enumerate}
 \item[{\rm (1)}] $\depth \gr_I(\RR)\geq d- 1$.
 \item[{\rm (2)}] $\depth \RR[I\TT] \geq d$.
 \item[{\rm (3)}] $S_Q(I)$ is Cohen-Macaulay of dimension $d$.
 \item[{\rm (4)}] $\red_Q(I)$ is independent of the choice of a minimal reduction $Q$.
 \item[{\rm (5)}] $\red(I)=\rme_1(I) - \rme_0(I)+ \lambda(\RR/I)+1>1$.
 \item[{\rm (6)}] $\red(I)+1$ is a bound on the degrees of the defining equations of $\gr_I(\RR)$.
 \end{enumerate}
 \begin{proof} Assertion (1) is proved in \cite[Corollary 3.11]{CPVP}. Assertions (1) and (2) are equivalent over a Cohen-Macaulay ring of dimension $d$. Assertion (3) follows from (2). Assertion (4) is proved in \cite[Theorem 1.2(i)]{Trung87}. Assertion (5) follows from (4), Theorem \ref{fiberofSally} and Remark \ref{fiberofSallyrmk}. Assertion (6) is proved in \cite[Theorem 1.2(ii)]{Trung87}.
 \end{proof}

\end{Proposition}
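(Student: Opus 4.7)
The plan is to chain the six assertions using Theorem~\ref{fiberofSally} and Remark~\ref{fiberofSallyrmk} as the structural backbone, leaning on two external results for the deepest inputs. For (1), I would invoke \cite{CPVP}, which asserts that $\lambda(I^2/QI)=1$ forces $\depth\gr_I(\RR)\geq d-1$ over a Cohen-Macaulay local ring. Intuitively, the hypothesis forces, via Remark~\ref{fiberofSallyrmk}, each fiber $I^{n+1}/QI^n$ (for $1\leq n\leq r-1$) to be cyclic of length one; this rigidity controls the relevant Koszul homology and gives near-maximal depth of $\gr_I(\RR)$.

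For (2), over a Cohen-Macaulay base one has the standard equivalence between $\depth\RR[I\TT]\geq d$ and $\depth\gr_I(\RR)\geq d-1$, so (2) follows from (1). For (3), first note $S_Q(I)\neq 0$ since $I^2/QI\neq 0$, hence by Theorem~\ref{fiberofSally}(2), $\dim S_Q(I)=d$; then (1), together with Theorem~\ref{fiberofSally}(3)(c), yields the equality $\s_Q(I)=\sum_{n=1}^{r-1}\lambda(I^{n+1}/QI^n)$, and by (3)(b) this equality is exactly the Cohen-Macaulayness of $S_Q(I)$.

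For (4), I would cite \cite{Trung87}, where the cyclic structure of the Sally fibers is exploited to prove that $\red_Q(I)$ depends only on $I$. For (5), assemble three facts: by Remark~\ref{fiberofSallyrmk}, $\lambda(S/(Q\TT)S)=r-1$; by Theorem~\ref{fiberofSally}(3)(b) combined with assertion (3), this equals $\s_Q(I)$; and by Theorem~\ref{fiberofSally}(2), $\s_Q(I)=\rme_1(I)-\rme_0(I)+\lambda(\RR/I)$. Combining the three gives $\red_Q(I)=\rme_1(I)-\rme_0(I)+\lambda(\RR/I)+1$, which by (4) equals $\red(I)$. The strict inequality $\red(I)>1$ is immediate: $\red(I)=1$ would mean $I^2=QI$, i.e., $\lambda(I^2/QI)=0$, contradicting the hypothesis. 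Finally, (6) is again a direct reference to \cite{Trung87}.

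The main obstacle is (1): once the depth bound on $\gr_I(\RR)$ is in hand, the remaining assertions follow almost formally from Theorem~\ref{fiberofSally}, Remark~\ref{fiberofSallyrmk}, and Trung's independence-and-bound theorem. If (1) were not available in the literature, I would attempt a direct proof by modding out a superficial element for $I$ to reduce to dimension one, verifying that the hypothesis $\lambda(I^2/QI)=1$ descends, computing in that base case (where the Sally structure is fully transparent), and then lifting depth back up through the short exact sequences defining $\gr_I(\RR)$.
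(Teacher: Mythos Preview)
Your proposal is correct and follows essentially the same approach as the paper: cite \cite{CPVP} for (1), use the standard CM equivalence for (2), derive (3) from the Cohen-Macaulayness of $S_Q(I)$ via Theorem~\ref{fiberofSally}, cite \cite{Trung87} for (4) and (6), and assemble (5) from (4), Theorem~\ref{fiberofSally}, and Remark~\ref{fiberofSallyrmk}. The only cosmetic difference is that the paper derives (3) from (2) directly, whereas you go through (1) and Theorem~\ref{fiberofSally}(3)(c); both routes are valid and amount to the same thing.
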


The following construction leads to other classes of almost Cohen--Macaulay algebras.
Let $(\RR, \m)$ be a Noetherian local ring of dimension $d$ and depth $r$.   
Let $Q$ be an ideal generated by system of parameters that forms a proper  sequence.
Let us now explore the depth of the Rees algebra of $Q$. For that we make use of the approximation complex
$\mathcal{Z}(Q)$(see \cite{HSV3} for details): 
\[
0 \rar Z_d\otimes_\RR \BB[-d] \rar \cdots \rar Z_{d-r+1} \otimes_{\RR} \BB[-d+r-1] 
  \rar
\cdots \rar Z_1\otimes_\RR \BB[-1] \rar \BB \rar 0,\]
where $\BB=\RR[T_{1}, \ldots, T_{d}]$ and the module $Z_i$ is the $i$-cycles of the Koszul complex $K(Q)$.

\medskip

To determine the depth of the coefficient modules of $\mathcal{Z}(Q)$, consider the Koszul complex $K(Q)$:
\[ 0 \rar K_d \lar K_{d-1} \lar \cdots \lar K_{d-r} \lar \cdots \lar K_1 \lar K_0 \rar 0.\]  
The sub complex up to $K_{d-r}$ is acyclic since $\depth \RR = r$. Thus for $i \geq d-r + 1$, the module $Z_i$ of $i$-cycles has 
a minimal free resolution
\[ 0 \rar K_d \lar K_{d-1} \lar \cdots \lar K_{i+1} \lar Z_i\rar 0 ,\]
in particular $\textrm{\rm proj dim } Z_i = d-1-i$. By the Auslander-Buchsbaum equality, we have
$\depth Z_i = r-d+1+i$. 
Thus $\depth H_0(\mathcal{Z}) \geq r+1$.
In particular,  if $I$ is generated by a $d$--sequence  $\depth \RR[ Q \TT] = r+1$.

\begin{Proposition}\label{Sallydsequence2} Let $(\RR, \m)$ be a Noetherian local ring of dimension $d$ and depth $d-1$,
$I$ an $\m$-primary ideal and $Q$ one of 
its minimal reductions. Denote by $\varphi$ the matrix of syzygies of $Q$.  Suppose that $Q$ is generated by a d-sequence and that the Sally module $S_Q(I)$ is Cohen--Macaulay.  If the first Fitting ideal 
$I_1(\varphi)$ of $Q$ is contained in $I$ then $\depth \gr_I(\RR) \geq d-1$. 
 \end{Proposition}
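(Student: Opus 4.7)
The plan is to propagate a depth bound from $\RR[Q\TT]$ down to $\gr_I(\RR)$ by chaining the depth lemma across four short exact sequences. The starting input is the approximation complex computation carried out immediately above the statement: since $Q$ is generated by a $d$-sequence, $Q$ is of linear type, so $\RR[Q\TT] = \Sym(Q)$, and the acyclicity argument for $\mathcal{Z}(Q)$ together with $\depth \RR = d-1$ gives $\depth \RR[Q\TT] \geq d$.

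The key step, and the one where the Fitting-ideal hypothesis is essential, is to identify the quotient $\RR[Q\TT]/I\RR[Q\TT]$. Writing $\Sym(Q) = \RR[T_1,\ldots,T_d]/L$, the generators of $L$ are the linear forms $\sum_j \varphi_{ij}T_j$, all of whose coefficients lie in $I_1(\varphi) \subseteq I$. Hence $L \subseteq I \cdot \RR[T_1,\ldots,T_d]$ and
\[
\RR[Q\TT]/I\RR[Q\TT] \;\cong\; (\RR/I)[T_1,\ldots,T_d],
\]
a polynomial ring over the Artinian ring $\RR/I$, hence Cohen--Macaulay of dimension $d$. The depth lemma applied to $0 \to I\RR[Q\TT] \to \RR[Q\TT] \to (\RR/I)[T_1,\ldots,T_d] \to 0$ then yields $\depth I\RR[Q\TT] \geq d$.

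What remains is a cascade of three depth lemma applications. From the defining Sally sequence $0 \to I\RR[Q\TT] \to I\RR[I\TT] \to S_Q(I) \to 0$, using $\depth S_Q(I) = d$ (the Cohen--Macaulay hypothesis, interpreted in its maximal dimension, as $\dim S_Q(I) \leq d$ by Theorem~\ref{fiberofSally}(1)), one gets $\depth I\RR[I\TT] \geq d$. Multiplication by $\TT$ identifies $I\RR[I\TT]$ with $\RR[I\TT]_{+}$ up to a degree shift, so applying the depth lemma to $0 \to \RR[I\TT]_{+} \to \RR[I\TT] \to \RR \to 0$ together with $\depth \RR = d-1$ yields $\depth \RR[I\TT] \geq d-1$. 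Finally, since $\gr_I(\RR) = \RR[I\TT]/I\RR[I\TT]$, the depth lemma on $0 \to I\RR[I\TT] \to \RR[I\TT] \to \gr_I(\RR) \to 0$ gives
\[
\depth \gr_I(\RR) \;\geq\; \min\bigl(\depth I\RR[I\TT]-1,\; \depth \RR[I\TT]\bigr) \;\geq\; d-1.
\]

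The main conceptual obstacle is the structural identification $\RR[Q\TT]/I\RR[Q\TT] \cong (\RR/I)[T_1,\ldots,T_d]$, where the $d$-sequence property (which yields linear type and hence the presentation by the linear forms of $\varphi$) and the containment $I_1(\varphi) \subseteq I$ (which kills those defining relations modulo $I$) combine decisively. Once this is in place, everything else is routine bookkeeping with the depth lemma, with the Cohen--Macaulayness of $S_Q(I)$ entering only at the Sally sequence step.
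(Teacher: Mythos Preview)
Your proof is correct and follows essentially the same route as the paper's: both establish $\depth \RR[Q\TT] \geq d$ via the approximation complex, use $I_1(\varphi)\subseteq I$ to identify $\RR[Q\TT]/I\RR[Q\TT]\cong (\RR/I)[T_1,\ldots,T_d]$, and then run the same depth-lemma cascade through the Sally sequence, the tautological sequence $0\to \RR[I\TT]_+\to \RR[I\TT]\to \RR\to 0$, and the sequence $0\to I\RR[I\TT]\to \RR[I\TT]\to \gr_I(\RR)\to 0$. Your justification of the key identification via the symmetric-algebra presentation is slightly more explicit than the paper's (which passes through $\gr_Q(\RR)\otimes \RR/I$), but the content is the same.
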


\begin{proof}
We start with the exact sequence defining $S_Q(I)$
\[ 0 \rar I \RR[Q\TT] \lar I \RR[I\TT] \lar S_Q(I) \rar 0.\]
From the theory of the approximation complex (see \cite{HSV3}), we have $\depth \gr_Q(\RR) = d-1$, while the calculation above shows that $\depth \RR[Q\TT] =d$. Since $I_1(\varphi) \subset I$, we have that $\RR/I \otimes_{\RR} \gr_Q(\RR) = 
\RR/I[\TT_1, \ldots, \TT_d]$.  
Consider  the exact sequence 
\[ 0 \rar I \RR[Q\TT] \lar \RR[Q\TT] \lar   
\RR/I[\TT_1, \ldots, \TT_d]\rar 0.\] It yields
 $\depth I \RR[ QT] \geq d$.  Since $S_Q(I)$ is Cohen--Macaulay,
we have $\depth I \RR[I\TT] \geq d$.
  Now we make use of the tautological exact sequence
 \[ 0 \rar I\TT\RR[I\TT] \lar \RR[I\TT] \lar \RR \rar 0 \]
 to get $\depth \RR[I\TT] \geq d-1$. Finally from $I\TT \RR[I\TT] \simeq I \RR[I\TT] [-1]$ and the exact sequence 
 \[ 0 \rar I\RR[I\TT] \lar \RR[I\TT] \lar \gr_I(\RR) \rar 0 \]
we get $\depth \gr_I(\RR) \geq d-1$.
\end{proof}

\begin{Corollary}
Let $(\RR, \m)$ be a Buchsbaum local ring of dimension $d$ and depth $d-1$. Let $I$ be an $\m$--primary ideal and $Q$ one of its minimal reductions. Denote by $\varphi$ the matrix of syzygies of $Q$.  Suppose that the Sally module $S_Q(I)$ is Cohen--Macaulay.   
If $I_{1}(\varphi) \subset I$, then $\red(I)+1$ is a bound on the degrees of the defining equations of $\gr_I(\RR)$. 
\end{Corollary}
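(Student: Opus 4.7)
The plan is to deduce this corollary from Proposition~\ref{Sallydsequence2} combined with the degree bound on the defining equations of $\gr_I(\RR)$ cited in the proof of Proposition~\ref{lambda1}(6).

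As a first step, I would verify that the hypotheses of Proposition~\ref{Sallydsequence2} are met. Since $I$ is $\m$-primary, any minimal reduction $Q$ is generated by a system of parameters of $\RR$. By the classical Stückrad--Vogel characterization of Buchsbaum rings, every system of parameters of a Buchsbaum local ring forms a $d$-sequence (and in fact a standard system of parameters). Hence $Q$ is generated by a $d$-sequence. Combined with the assumptions in the corollary that $\RR$ has depth $d-1$, that $S_Q(I)$ is Cohen--Macaulay, and that $I_1(\varphi) \subset I$, Proposition~\ref{Sallydsequence2} yields the depth inequality $\depth \gr_I(\RR) \geq d-1$.

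As a second step, I would invoke the Trung-type bound that already underlies Proposition~\ref{lambda1}(6), namely \cite[Theorem 1.2]{Trung87}. The conclusion I need is that, for an $\m$-primary ideal $I$ with $\depth \gr_I(\RR) \geq d-1$, the defining ideal of $\gr_I(\RR)$ (viewed as a quotient of the symmetric algebra $\Symi(I/I^2)$) is generated in degrees at most $\red(I)+1$. Since the Cohen--Macaulayness of $S_Q(I)$ together with Theorem~\ref{fiberofSally}(3)(b) pins down $\red_Q(I)$ and gives $\red_Q(I)\leq \s_Q(I)+1$, the common value equals $\red(I)$ and the stated bound follows.

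The main obstacle is not the reduction to Proposition~\ref{Sallydsequence2}, which amounts to noting the Buchsbaum property implies every system of parameters is a $d$-sequence, but rather checking that Trung's degree bound, originally proved in the Cohen--Macaulay setting of \cite{Trung87}, carries over to the Buchsbaum case at hand. A careful inspection of Trung's argument suggests that only the inequality $\depth \gr_I(\RR) \geq d-1$ (and not the Cohen--Macaulayness of $\RR$ itself) is used to extract the linear relations needed to lift generators of the defining ideal to the degree $\red(I)+1$; this is the point where one must be somewhat careful, but the adaptation should be essentially formal.
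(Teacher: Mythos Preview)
Your approach is essentially the paper's: apply Proposition~\ref{Sallydsequence2} (noting that in a Buchsbaum ring every system of parameters is a $d$-sequence, which is the only hypothesis needing verification) and then invoke \cite[Theorem~1.2(ii)]{Trung87}. The detour through Theorem~\ref{fiberofSally}(3)(b) to ``pin down $\red_Q(I)$'' is unnecessary, since Trung's bound is stated directly in terms of $\red(I)$ once $\depth \gr_I(\RR)\geq d-1$ is established.
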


\begin{proof} The assertion follows from Proposition~\ref{Sallydsequence2} and \cite[Theorem 1.2(ii)]{Trung87}.
\end{proof}


\subsubsection*{Change of rings for Sally modules} Let $(\RR,\m)$ be a Noetherian local ring of dimension $d\geq 1$ and let 
$\varphi: \RR \rar \SS$ be a finite homomorphism. Suppose $\SS$ is a Cohen--Macaulay ring. Let $I$ be an $\m$-primary $\RR$-ideal with a 
minimal reduction $Q$. For each maximal ideal $\mathfrak{M}$ of $\SS$
 the ideal $I\SS_{\mathfrak{M}}$ is ${\mathfrak M}\SS_{\mathfrak{M}}$-primary and $Q\SS_{\mathfrak{M}}$ is a minimal reduction of $I\SS_{\mathfrak{M}}$.
We define the Sally module of $I\SS$ relative to $Q\SS$ in the same manner,
\[ S_{Q\SS}(I\SS) = \bigoplus_{n\geq 1} I^{n+1}\SS/I Q^{n}\SS.\]
Note that $S_{Q\SS}(I\SS)$ is a finitely generated graded $\gr_Q(\RR)$-module. 
Its localization at $\mathfrak{M}$ gives $S_{Q\SS_{\mathfrak{M}}}(I\SS_{\mathfrak{M}})$.  

\medskip


Consider the natural mapping 
$ {\ff}_{\varphi}: \SS \otimes_{\RR} S_Q(I)   \rar S_{Q\SS}(I\SS)$, 
which is a graded surjection.  
Let $\nu = \nu_{\RR}(\SS)$ denote the minimal number of generators of $\SS$ as a $\RR$--module. 
Combining ${\ff}_{\varphi}$ with a presentation $\RR^{\nu} \rar \SS \rar 0$ gives rise to a homogeneous  surjection of graded modules
\[
 [S_Q(I)]^{\nu} \rar S_{Q\SS}(I\SS).
\]
The following takes information from this construction into Theorem~\ref{fiberofSally}. It is going to be a factor in our estimation of several
reduction number calculations.

\begin{Theorem}\label{changeSally}{\rm [Change of Rings Theorem] }Let $\RR, \SS, I, Q$ be as above. Then
\begin{enumerate}
\item[{\rm (1)}]
 If $\dim S_Q(I) < d$, then $S_{Q\SS}(I\SS)= 0$.
\item[{\rm (2)}] If $\s_{Q \SS}(I\SS) \neq 0$, then $\dim S_Q(I) = d$.
\item[{\rm (3)}] $\s_{Q \SS}(I\SS) \leq \nu_{\RR}(\SS) \cdot \s_{Q}(I)$.
\end{enumerate}
\end{Theorem}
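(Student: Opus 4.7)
The strategy is to deduce all three assertions from the graded surjection
\[
\psi\colon [S_Q(I)]^\nu \twoheadrightarrow S_{Q\SS}(I\SS)
\]
of finitely generated $\RR[Q\TT]$-modules displayed just before the theorem, together with the dimension bound of Theorem~\ref{fiberofSally}(1) and the ``either $0$ or top-dimensional'' dichotomy of Theorem~\ref{fiberofSally}(2), the latter applied after localizing at the maximal ideals of $\SS$.

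I would begin with (3), the substantive assertion. Setting $K = \ker \psi$ and looking at the exact sequence $0 \to K \to [S_Q(I)]^\nu \to S_{Q\SS}(I\SS) \to 0$ of finitely generated graded $\RR[Q\TT]$-modules, each term is annihilated by a power of $\m$ and hence has Krull dimension at most $d$; the ideal $(Q\TT) = (a_1\TT, \ldots, a_d\TT)$ consists of $d$ elements and produces a finite-length quotient on each term. Additivity of $\rme_0((Q\TT), -)$ along the sequence then yields
\[
\nu \cdot \s_Q(I) = \rme_0((Q\TT), [S_Q(I)]^\nu) = \rme_0((Q\TT), K) + \s_{Q\SS}(I\SS),
\]
and non-negativity of $\rme_0((Q\TT), K)$ gives (3). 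The case $\dim S_Q(I) < d$ is trivial since then both sides vanish.

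Part (2) follows immediately: $\s_{Q\SS}(I\SS) = \rme_0((Q\TT), S_{Q\SS}(I\SS))$ is a multiplicity with respect to $d$ elements, so it is nonzero only when $\dim S_{Q\SS}(I\SS) = d$. The surjection $\psi$ then forces $\dim S_Q(I) \geq d$, and Theorem~\ref{fiberofSally}(1) supplies the reverse inequality.

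Finally, for (1), the assumption $\dim S_Q(I) < d$ propagates through $\psi$ to give $\dim S_{Q\SS}(I\SS) < d$. Since $S_{Q\SS}(I\SS)$ is killed by some power of $\m$ and every maximal ideal of the semi-local ring $\SS$ contains $\m\SS$, the module decomposes as $\bigoplus_{\mathfrak{M}} S_{Q\SS_{\mathfrak{M}}}(I\SS_{\mathfrak{M}})$ over the maximal ideals of $\SS$, and it suffices to show that each summand vanishes. Each $\SS_{\mathfrak{M}}$ is Cohen-Macaulay local with $I\SS_{\mathfrak{M}}$ primary to $\mathfrak{M}\SS_{\mathfrak{M}}$; invoking Theorem~\ref{fiberofSally}(2)---applied to a minimal reduction of $I\SS_{\mathfrak{M}}$ contained in $Q\SS_{\mathfrak{M}}$ and pulled back through the natural surjection of Sally modules---one obtains the dichotomy that each local summand is either zero or of maximal dimension, whereupon the bound inherited from $\psi$ forces it to vanish. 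I expect the most delicate step to be precisely this last one, where one must ensure that the Cohen-Macaulay dichotomy combined with the global dimension bound truly rules out nonzero local Sally modules of dimension smaller than $d$, even when $\SS$ is not equidimensional.
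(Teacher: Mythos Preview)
Your argument is correct and follows the same route as the paper: everything is read off the surjection $[S_Q(I)]^\nu \twoheadrightarrow S_{Q\SS}(I\SS)$ together with Theorem~\ref{fiberofSally}(2). The paper's proof is a one-sentence pointer to exactly these two ingredients, and your write-up simply unpacks it.

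One remark on part~(1): your detour through ``a minimal reduction of $I\SS_{\mathfrak{M}}$ contained in $Q\SS_{\mathfrak{M}}$'' and the worry about $\SS$ failing to be equidimensional are unnecessary. The paper's setup immediately preceding the theorem already records that $Q\SS_{\mathfrak{M}}$ \emph{is} a minimal reduction of $I\SS_{\mathfrak{M}}$ in the Cohen--Macaulay local ring $\SS_{\mathfrak{M}}$; since $Q$ is generated by $d$ elements and a minimal reduction in a Cohen--Macaulay local ring is a system of parameters, this forces $\dim \SS_{\mathfrak{M}} = d$. Theorem~\ref{fiberofSally}(2) then applies directly, and the dichotomy ``$0$ or dimension $d$'' is exactly what you need.
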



\begin{proof} All the assertions follow from the surjection ${\ds [S_Q(I)]^{\nu} \rar S_{Q\SS}(I\SS)}$, where $\nu=\nu_{\RR}(\SS)$, and the vanishing property of Sally modules over Cohen-Macaulay rings 
(Theorem~\ref{fiberofSally}(2)).
\end{proof}


\begin{Remark}{\rm It would be good to prove that the condition  \ref{changeSally}(2) is actually an equivalence.}
\end{Remark}

\noindent
To allow a discussion of some results in Section \ref{acisection}, we give
several  observations on a technique to  construct 
Rees algebras which are almost Cohen-Macaulay--that is depth $\gr_I(\RR)\geq d-1$.

\medskip

 We assume $(\RR, \m)$ is a Gorenstein local ring
and take a parameter ideal $Q$. Let $I$ be an almost complete intersection, $Q \subset I$. We look only at cases when $\lambda(I/Q)$ is small. 
Recall that if $\lambda(I/Q)=1$, then $I^2 = QI$. 
Let us consider Northcott ideals (see \cite[Theorem 1.113]{icbook}). Let $Q=(a,b)$ be an $\m$--primary ideal of $\RR=k[x,y]$.
Suppose that $Q \subset \m^{n+1}$, $n\geq 2$, and let $L = (x,y^n)$. 
Then
$I = Q:L = (a,b,c)$ is a proper ideal of $\RR$. Recall how $c$ arises: write 
\begin{eqnarray*}
a &=& a_1x + a_2y^n\\
b & = & b_1x + b_2 y^n,
\end{eqnarray*}
for some $a_1,a_2,b_1,b_2$ in $\RR$. Then we have $c = a_1b_2 - a_2 b_1$. Note that $Q:c = (x,y^n)$. Thus $\lambda(I/Q)=\lambda(\RR/(x,y^n))=n$. 

\medskip

For these ideals $Q$, we  further have:

\begin{Proposition} \label{intcloslink}
 The ideal $Q:(x,y^n)$ is integral over $Q$.
\end{Proposition}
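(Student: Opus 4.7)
The plan is to prove $c\in\overline{Q}$ via the determinantal (Cayley--Hamilton) trick applied to the finitely generated faithful $\RR$-module $M=\m^n$; since $Q\subset\overline{Q}$ trivially, this will give $I=Q+(c)\subset\overline{Q}$.

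The hypothesis $Q\subset\m^{n+1}$ enters through the elementary inclusion $\m^{n+1}\subset x\m^n+y^n\m$, verified by inspecting monomial generators $x^iy^{n+1-i}$: factor out one $x$ when $i\geq 1$, and factor out $y^n$ when $i=0$. This lets me refine the representations $a=a_1x+a_2y^n$ and $b=b_1x+b_2y^n$ so that $a_1,b_1\in\m^n$ and $a_2,b_2\in\m$. Take $c=a_1b_2-a_2b_1$ for these choices; any other valid choice alters $c$ only by an element of $Q$, so integrality is unaffected. The adjugate identities sharpen to
\[cx=b_2a-a_2b\in\m Q\qquad\text{and}\qquad cy^n=a_1b-b_1a\in\m^nQ.\]

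Checking on the monomial generators $x^iy^{n-i}$ of $\m^n$ now yields $c\m^n\subset Q\m^n$: for $i\geq 1$ write $c\cdot x^iy^{n-i}=x^{i-1}y^{n-i}\cdot cx\in\m^{n-1}\cdot\m Q=Q\m^n$, and for $i=0$ use $cy^n\in\m^nQ=Q\m^n$ directly. The determinantal trick applied to multiplication by $c$ on $\m^n$ (finitely generated and faithful, since $\RR$ is a domain) then produces a monic relation $c^N+q_1c^{N-1}+\cdots+q_N=0$ with $q_i\in Q^i$, placing $c$ in $\overline{Q}$.

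The main obstacle is the refined decomposition itself. While $cx,cy^n\in Q$ is automatic from the adjugate identities, one needs the extra factor of $\m$ (upgrading to $cx\in\m Q$ and $cy^n\in\m^nQ$) in order to land $c\m^n$ inside $Q\m^n$ rather than merely inside $Q$; without that, no faithful module is available on which to run Cayley--Hamilton. This upgrade is precisely what the order condition $Q\subset\m^{n+1}$ supplies.
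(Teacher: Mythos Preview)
Your argument is correct and self-contained. The refinement $\m^{n+1}\subset x\m^n+y^n\m$ is exactly what is needed: it upgrades the adjugate identities to $cx\in\m Q$ and $cy^n\in\m^nQ$, after which $c\m^n\subset Q\m^n$ follows on monomial generators and the determinantal trick applies to the faithful module $\m^n$. Your remark that two choices of decomposition differ in $c$ only by an element of $Q$ is also right (in the UFD one has $a_1'-a_1\in(y^n)$, $a_2'-a_2\in(x)$, and the cross terms assemble into $sa-rb\in Q$), so the conclusion is independent of the representation.

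The paper takes a different and shorter route: it invokes the known fact that for a parameter ideal $Q\subset\m^{n+1}$ in a regular (Gorenstein) local ring one has $(Q:\m^n)^2=Q(Q:\m^n)$, so $Q:\m^n$ is integral over $Q$; then the elementary inclusion $\m^n\subset(x,y^n)$ gives $Q:(x,y^n)\subset Q:\m^n\subset\overline{Q}$. That argument is quicker but leans on nontrivial cited results about iterated socles, and it actually yields a bit more (namely reduction number one for $Q:\m^n$). Your approach trades that extra information for an elementary, citation-free proof tailored to the Northcott presentation; it also makes transparent where the order hypothesis $Q\subset\m^{n+1}$ is used.
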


\begin{proof} To control the integrality of direct links one can make use of 
\cite[Theorem 2.3]{CGHPU13}, \cite[Theorem 3.1]{Wang07}, or \cite[Theorem 7.4]{WY12}:
 \[ (Q:\m^n)^2 =  Q(Q:\m^n),\] in particular $Q:\m^n$ is integral over $Q$. Since $\m^n \subset (x,y^n)$,
$Q:(x,y^n) \subset Q:\m^n$. Thus $Q: (x,y^n)$ is also integral over $Q$.
\end{proof}

In this study,
 it is important to find the
lengths of the modules $I^{n+1}/QI^n$. One case that is important for us
is given by:

\begin{Proposition}\label{acm1} $($\cite[Proposition 3.12]{acm}$)$ \label{toolA} Let $\RR$ be a Gorenstein local
ring, let $Q$ be an ideal generated by a system of parameters and $I = (Q,a)$. Then 
\[ \lambda(I^2/QI) = \lambda(I/Q) - \lambda(\RR/I_1(\varphi)),\]
where $\varphi$ is the matrix of syzygies of $I$.
\end{Proposition}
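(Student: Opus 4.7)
The plan is to reduce the identity to a single length computation for a cyclic $\RR$-module, and then match its annihilator with $I_1(\varphi)$ by analyzing the syzygies of $I$.

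Set $L = Q:a$. Since $I = Q + (a)$, one has $I^2 = Q^2 + aQ + (a^2)$ and $QI = Q^2 + aQ$, so both $I/Q$ and $I^2/QI$ are cyclic, with
\[ I/Q \;\cong\; \RR/L, \qquad I^2/QI \;\cong\; \RR/(QI:a^2). \]
Since $sa\in Q$ implies $sa^2 \in aQ \subseteq QI$, we have $L \subseteq QI:a^2$, and multiplication by $a$ induces a surjection $I/Q \twoheadrightarrow I^2/QI$ which under the cyclic identifications is the canonical projection $\RR/L \twoheadrightarrow \RR/(QI:a^2)$. The associated exact sequence
\[ 0 \rar (QI:a^2)/L \rar \RR/L \rar \RR/(QI:a^2) \rar 0 \]
yields $\lambda(I/Q) - \lambda(I^2/QI) = \lambda\big((QI:a^2)/L\big)$, so it suffices to prove the isomorphism $(QI:a^2)/L \cong \RR/I_1(\varphi)$.

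Next I would describe the columns of $\varphi$ explicitly. Since $\RR$ is Cohen--Macaulay and $Q = (f_1,\ldots,f_d)$ is generated by a system of parameters, the $f_i$ form a regular sequence. A tuple $(r_1,\dots,r_d,s)$ is a syzygy on $(f_1,\dots,f_d,a)$ iff $sa = -\sum_i r_i f_i$, which forces $s \in L$; conversely each $s \in L$ yields such a relation, with the $r_i$ determined modulo Koszul syzygies on the $f_i$. Hence a generating set of columns of $\varphi$ consists of Koszul syzygies among the $f_i$ (entries $\pm f_k$) together with witness syzygies $(r_{j1}, \ldots, r_{jd}, -s_j)$ attached to a minimal generating set $s_1,\ldots,s_\mu$ of $L$. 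As $Q \subseteq L$ absorbs the Koszul entries, this gives
\[ I_1(\varphi) \;=\; L + (r_{ji} \mid 1 \le j \le \mu,\ 1 \le i \le d). \]

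The main obstacle is the final identification $(QI:a^2)/L \cong \RR/I_1(\varphi)$. A direct computation shows that $r \in (QI:a^2)$ is equivalent to the existence of $c_1,\dots,c_d \in \RR$ with $ra - \sum_i c_i f_i \in Q^2:a$, and the idea is to view the residue of $r$ modulo $L$ as parametrizing precisely these ``generalized witnesses'' modulo the witness data $\{s_j, r_{ji}\}$ of $\varphi$. Here the Gorenstein hypothesis is used essentially: it provides the linkage identity $Q:L = I$ and makes $\RR/Q$ a Gorenstein Artin local ring in which $L/Q$ and $I/Q$ are socle-dual via $\bar a$. Exploiting this duality one shows the cyclic $\RR$-module $(QI:a^2)/L$ has annihilator exactly $I_1(\varphi)$; the careful bookkeeping of coefficients needed for this step is the content of \cite[Proposition 3.12]{acm}, cited here.
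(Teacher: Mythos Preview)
The paper does not give its own proof of this proposition: it is quoted verbatim from \cite[Proposition~3.12]{acm} and used as a black box. There is therefore nothing in the present paper to compare your argument against.

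On the substance of your outline: the reduction is sound. With $L=Q:a$ one indeed has $I/Q\cong\RR/L$ and $I^2/QI\cong\RR/(QI:a^2)$, so the formula is equivalent to $\lambda\big((QI:a^2)/L\big)=\lambda\big(\RR/I_1(\varphi)\big)$. Your description of the syzygy module of $(f_1,\dots,f_d,a)$---Koszul relations on the regular sequence together with one witness relation for each generator of $L$---is correct, and it does yield $I_1(\varphi)=L+(r_{ji})$.

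The gap is that you do not carry out the decisive step. You assert that $(QI:a^2)/L$ is cyclic with annihilator exactly $I_1(\varphi)$, but you neither prove cyclicity nor compute the annihilator; instead you hand this step back to \cite[Proposition~3.12]{acm}. Since that is precisely the reference the paper itself invokes, your proposal ends at the same place the paper does---with a citation---rather than supplying an independent argument. If you want a self-contained proof, the missing ingredient is an explicit analysis (using the Gorenstein duality $Q:L=I$ in $\RR/Q$) that produces a single element of $QI:a^2$ whose class generates $(QI:a^2)/L$ and whose annihilator is $L+(r_{ji})$.
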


\begin{Example}{\rm Let us consider some examples.
 In the case of the Northcott ideals above we have
\[ \varphi = \left[
\begin{array}{rr}
-b_1 & -b_2 \\
a_1 & a_2 \\
-y^n & x
\end{array}\right],
\] $I_1(\varphi) = (x,y^n, a_1, a_2, b_1, b_2) =
(x,y^m)$, where $1\leq m\leq n$. Appropriate choices for $a_2$ or $b_2$ give rise to all values for $m$ in the range. For example, let $Q=(x^4+y^5, x^2y^2+xy^3)$, $L=(x,y^3)$ and $I=Q:L$. Then $\lambda(I/Q)=3$ and $I_{1}(\varphi)=(x,y^2)$. Hence by Proposition \ref{acm1}, $\lambda(I^2/QI)=1$.
In fact, the reduction number $r_{Q}(I)=2$.
More generally, choosing $m=n-1$ above gives $\lambda(I^2/QI) = 1$, by Proposition \ref{acm1}.
Moreover, if $(a_1, a_2, b_1, b_2) \subset (x,y)^n$, then $I^2 = QI$ by Proposition \ref{acm1}.
}\end{Example}
%

\section{Almost complete intersections}\label{acisection}
\noindent
This section makes use of the behavior of the components of  Sally modules of almost complete intersections in order
to derive uniform bounds for reduction numbers.
 We begin with the following result.

\begin{Proposition}\label{redlaq2a}  Let $(\RR, \m )$  be a Noetherian local ring of dimension $d$ with infinite residue field, $I$ an $\m$-primary
almost complete intersection ideal with a minimal reduction $Q$. If for some integer $n$, $\lambda(I^n/QI^{n-1})=1$, then $\red(I) \leq n \nu(\m) -1$.
\end{Proposition}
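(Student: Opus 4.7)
The strategy is to show $a^{n\nu(\m)}\in QI^{n\nu(\m)-1}$, which combined with $I^k=QI^{k-1}+\RR a^k$ (valid because $I$ is an almost complete intersection with $I=(Q,a)$) immediately yields $\red_{Q}(I)\leq n\nu(\m)-1$, and therefore $\red(I)\leq n\nu(\m)-1$.

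First I would use that, since $I$ is an almost complete intersection with $\nu(I)=d+1$ and $\RR/\m$ is infinite, one can pick a minimal reduction $Q$ with $\nu(Q)=d$ and write $I=(Q,a)$ for some $a\in I$; expanding $I^k=(Q+(a))^k$ and noting that any product involving a generator of $Q$ lies in $QI^{k-1}$ gives $I^k=QI^{k-1}+\RR a^k$. The hypothesis $\lambda(I^n/QI^{n-1})=1$, together with the cyclic generation of $I^n/QI^{n-1}$ by the image of $a^n$, then forces $I^n/QI^{n-1}\simeq \RR/\m$, so $\m a^n\subseteq QI^{n-1}$. Since $I$ is proper and $\m$-primary, $a\in I\subseteq\m$, and I would fix an expansion $a=\sum_{i=1}^v c_i x_i$ with $\m=(x_1,\ldots,x_v)$, $v=\nu(\m)$.

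The main computation is to raise the relation $\m a^n\subseteq QI^{n-1}$ to the $v$-th power as ideals:
\[
\m^v a^{nv}=(\m a^n)^v\subseteq (QI^{n-1})^v=Q^v I^{v(n-1)}\subseteq QI^{nv-1},
\]
where the last inclusion uses $Q^{v-1}\subseteq I^{v-1}$. Next I would apply a pigeonhole expansion: since every monomial $x^{\vec\alpha}$ of total degree $nv$ in $v$ variables satisfies $\alpha_i\geq n$ for some $i$, the expansion $a^{nv}=(\sum c_i x_i)^{nv}$ shows $a^{nv}\in(x_1^n,\ldots,x_v^n)\cdot\m^{n(v-1)}$. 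Combining this decomposition with the relations $x_i a^n\in QI^{n-1}$ (and their multiples $x_i^n a^n\in QI^{n-1}$ obtained by multiplying by $x_i^{n-1}$), one tracks the $I$-degree contributions of the remaining factors to arrive at $a^{nv}\in QI^{nv-1}$.

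The main obstacle is this last step: the naive iteration $a^{n+k}=a\cdot a^{n+k-1}$, combined with $a\in\m$ and $\m a^n\subseteq QI^{n-1}$, only produces $a^{n+k}\in QI^{n+k-2}$, which is off by one from the required $QI^{n+k-1}$. Closing that gap all the way up to $a^{nv}\in QI^{nv-1}$ is what forces the appearance of $\nu(\m)$ in the bound: the argument must use all $v$ generators of $\m$ simultaneously (via the $v$-fold raising of $\m a^n\subseteq QI^{n-1}$ and the pigeonhole decomposition) rather than one at a time, and I expect the bookkeeping of the residual $\m^{n(v-1)}$-factors against the target degree $I^{nv-1}$ to be the technical heart of the proof.
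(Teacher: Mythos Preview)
Your setup through $\m a^n\subset QI^{n-1}$ is correct, but the pigeonhole step does not close the gap, and I do not see how it can. Writing $a^{nv}$ as a combination of monomials in the $x_i$ and extracting a factor $x_i^n$ from each term gives you $a^{nv}\in(x_1^n,\ldots,x_v^n)$, but the only relations you have are of the form $x_i\cdot a^n\in QI^{n-1}$; these involve the product $x_i a^n$, not $x_i$ or $x_i^n$ alone, and there is no substitution that converts an $x_i^n$-factor in the monomial expansion of $a^{nv}$ into a term containing $a^n$. Tracking degrees more carefully only reproduces the off-by-one you already noticed: from $\m a^n\subset QI^{n-1}$ one gets $a^{2n}=a^n\cdot a^n\in \m\cdot a^n\subset QI^{n-1}$, then $a^{kn}\in QI^{(k-1)n-1}$ by multiplying through with $a^n\in I^n$, so at $k=v$ one is still short by $n$ in the exponent of $I$. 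Raising $\m a^n\subset QI^{n-1}$ to the $v$-th power gives $\m^v a^{nv}\subset QI^{nv-1}$, but you cannot strip the $\m^v$ without further input.

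The paper's argument supplies exactly that missing input. It first disposes of the case $\nu(I^n)<\binom{d+n}{n}$ via the Eakin--Sathaye theorem, which already gives $\red(I)\le n-1$. In the remaining case $\nu(I^n)=\binom{d+n}{n}$, a generator count forces $\nu(QI^{n-1})=\nu(I^n)-1$, and then the length computation
\[
\lambda(\m I^n/\m QI^{n-1})=\lambda(I^n/QI^{n-1})+\nu(QI^{n-1})-\nu(I^n)=0
\]
yields the \emph{equality} $\m I^n=\m QI^{n-1}$, not merely the inclusion $\m I^n\subset QI^{n-1}$ that you obtained. This stronger statement is exactly what the determinant trick needs: multiplication by any $b\in I^n$ maps $\m$ into $(QI^{n-1})\m$, so Cayley--Hamilton applied to the $\nu(\m)$-generated module $\m$ gives $(I^n)^s=(QI^{n-1})(I^n)^{s-1}$, i.e.\ $I^{ns}=QI^{ns-1}$. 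The Eakin--Sathaye dichotomy and the upgrade from $\m I^n\subset QI^{n-1}$ to $\m I^n=\m QI^{n-1}$ are the ideas your outline is missing.
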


\begin{proof} Since $Q$ is  a minimal reduction of $I$, it is generated by a subset of the  minimal set of generators of $I$, that is $I = (Q,a)$. 
The expected number of generators of $I^n$ is ${d+n}\choose{n}$. A lesser value for $\nu(I^n)$ would imply by
\cite[Theorem 1]{ES} that $I^n = JI^{n-1}$ for some minimal reduction $J$ of $I$, and therefore $\red(I) \leq n-1$.

\medskip

\noindent Suppose that $\nu(I^{n}) ={{d+n}\choose{n}}$. Since $\lambda(I^n/QI^{n-1})=1$, $\m I^{n} \subset QI^{n-1}$. Moreover, we have
\[ \m QI^{n-1} \subset \m I^{n} \subset QI^{n-1} \subset I^{n}.\]
Note that
\[ \lambda( \m I^{n}/ \m Q I^{n-1} ) = \lambda(I^{n}/\m QI^{n-1}) - \lambda(I^{n}/\m I^{n}) = \lambda(I^{n}/QI^{n-1}) + \nu(QI^{n-1}) - \nu(I^{n})=0.\]
It follows that $\m I^n = \m QI^{n-1}$. From the Cayley-Hamilton theorem we have $(I^n)^s = (QI^{n-1})(I^{n})^{s-1}$, where $s = \nu(\m)$.  In particular $I^{ns} = QI^{ns-1}$, as desired. 
\end{proof}
 
 \medskip
 
 Next we give examples of ideals with $\lambda(I^n/QI^{n-1})=1$ for some integer $n$.

 \medskip
 
 \begin{Example}{\rm Let $\RR = k[x,y]$, where $k$ is an infinite field, $Q = (x^5 + y^6, xy^5 + y^7)$ and consider the ideals $Q:L$, where $L= (x^n, y) $ for some positive integer $n$.
 \begin{itemize}
\item[{\rm  (1)}] If $I = Q: (x^2, y)$, then $I^2 = QI$.
 
\item[{\rm (2)}] If $I = Q: (x^3, y)$, then 
 $\lambda(I^2/QI) = 1$, $\red(I) = 2$.
 
 \item[{\rm (3)}] If $I = Q: (x^4, y)$, then 
 $\lambda(I^2/QI) = 3$, 
 $\lambda(I^3/QI^2) = 2$, $\lambda(I^4/QI^3) = 1$, $\red(I) = 5$.
  
 \item[{\rm  (4)}] $I = Q:(x^5,y) = (x^5-xy^4, y^6 +xy^4)$ is a complete intersection. Notice that $I$ is not  integral over $Q$ (see Proposition \ref{intcloslink}).
 \end{itemize}
 }\end{Example}

 \begin{Corollary}\label{redlaq2} $(\RR, \m )$ be a Gorenstein local ring of dimension $d$ with infinite residue field, $I$ an $\m$-primary
ideal with a minimal reduction $Q$. If $\lambda(I/Q)=2$, then $\red(I) \leq 2 \nu(\m) -1$.
\end{Corollary}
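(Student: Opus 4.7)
The plan is to reduce to Proposition~\ref{redlaq2a} applied with $n = 2$, which requires that $I$ be an almost complete intersection and that $\lambda(I^2/QI) = 1$. Once these hypotheses are verified, the bound $\red(I) \leq 2\nu(\m) - 1$ follows at once.

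The first step is to establish that $I = (Q, a)$ for some $a \in I$. I would consider the filtration $Q \subseteq Q + \m I \subseteq I$. Since $\RR$ is Gorenstein and $Q$ is generated by a system of parameters, the Artinian quotient $\RR/Q$ is Gorenstein, so its socle $(Q:\m)/Q$ has length one. If we had $\m I \subseteq Q$, then $I \subseteq Q:\m$, forcing $\lambda(I/Q) \leq \lambda((Q:\m)/Q) = 1$ and contradicting the hypothesis $\lambda(I/Q) = 2$. Therefore $Q + \m I \subsetneq I$, and Nakayama's lemma yields a single element $a \in I$ with $I = (Q, a)$.

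The second step is to pin down $\lambda(I^2/QI)$. From $I = (Q, a)$ one has $I^2 = QI + (a^2)$, so $I^2/QI$ is cyclic. Moreover, Proposition~\ref{acm1} gives
\[ \lambda(I^2/QI) \;=\; \lambda(I/Q) - \lambda(\RR/I_1(\varphi)) \;=\; 2 - \lambda(\RR/I_1(\varphi)). \]
Since $\varphi$ is a minimal presentation matrix of $I$, its entries lie in $\m$, so $I_1(\varphi) \subseteq \m$ and $\lambda(\RR/I_1(\varphi)) \geq 1$. Hence $\lambda(I^2/QI) \in \{0, 1\}$. If $\lambda(I^2/QI) = 0$, then $I^2 = QI$ and $\red(I) \leq 1 \leq 2\nu(\m) - 1$. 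If $\lambda(I^2/QI) = 1$, then Proposition~\ref{redlaq2a} with $n = 2$ yields $\red(I) \leq 2\nu(\m) - 1$.

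The main obstacle is the Gorenstein socle argument of the first step: that is the sole place where the Gorenstein hypothesis is used, and it is what rules out the possibility that $I$ might require two extra generators beyond those of $Q$. Everything after that reduction is bookkeeping using Propositions~\ref{acm1} and \ref{redlaq2a}.
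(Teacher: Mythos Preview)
Your approach is the same as the paper's: reduce to Proposition~\ref{redlaq2a} with $n=2$ by invoking Proposition~\ref{acm1} (the paper's proof is the single line ``by Proposition~\ref{toolA}, $\lambda(I^2/QI)=1$''), and you are in fact more careful than the paper in two respects --- you explicitly argue that $I$ is an almost complete intersection, and you allow for the harmless case $\lambda(I^2/QI)=0$. One small wording slip: from $\m I \not\subseteq Q$ what follows is $Q \subsetneq Q+\m I$, not $Q+\m I \subsetneq I$; combined with $\lambda(I/Q)=2$ this gives $\lambda\bigl(I/(Q+\m I)\bigr)\le 1$, which is the statement you actually need to conclude $I=(Q,a)$.
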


\begin{proof} 
According to Proposition~\ref{toolA},
 since $\lambda(I/Q) = 2$, we have that $\lambda (I^2/QI) = 1$.
 \end{proof}

 \begin{Corollary}\label{redlaq2b}Let $(\RR, \m )$  be a Cohen-Macaulay local ring of dimension $d$ with infinite residue field, $I$ an $\m$-primary
almost complete intersection ideal with a minimal reduction $Q$.
If $\lambda(I^2/QI)=1$, 
 then $\m I^n = \m I Q^{n-1} $ for $n\geq 2$. In particular $\m S_Q(I) = 0$.
\end{Corollary}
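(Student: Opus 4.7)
The plan is to prove $\m I^n = \m I Q^{n-1}$ by induction on $n \geq 2$; the assertion $\m S_Q(I) = 0$ is then immediate from $\m I^{n+1} = \m I Q^n \subseteq I Q^n$ for $n \geq 1$. Since $I$ is an almost complete intersection, write $I = (Q, a)$; then $I^2 = QI + (a^2)$, and the hypothesis $\lambda(I^2/QI) = 1$ is equivalent to $\m I^2 \subseteq QI$ together with $a^2 \notin QI$. By Remark~\ref{fiberofSallyrmk}, the cyclic structure propagates: $I^n = QI^{n-1} + (a^n)$ for every $n \geq 1$.

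The main obstacle is the base case $n = 2$, which I plan to reduce to the equality $\m QI = QI \cap \m I^2$; combined with $\m I^2 \subseteq QI$ this yields $\m I^2 \subseteq \m QI$, and the reverse inclusion is clear. To establish the equality I use a minimal-generator count. Since $\m I^2 \subseteq QI$ and $a^2 \notin QI$, the short exact sequence
\[
0 \to QI/(QI \cap \m I^2) \to I^2/\m I^2 \to I^2/QI \to 0
\]
(together with $\lambda(I^2/QI)=1$) shows $\dim_k QI/(QI \cap \m I^2) = \nu(I^2) - 1$. By Proposition~\ref{lambda1}(4) (here Cohen-Macaulayness enters) $\red_J(I) = \red_Q(I) \geq 2$ is independent of the minimal reduction $J$, so the Eakin-Sathaye theorem rules out any $J$ with $I^2 = JI$, forcing $\nu(I^2) = \binom{d+2}{2}$. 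Writing $Q = (q_1, \ldots, q_d)$, the ideal $QI$ is generated by the monomials $q_i q_j$ and $q_i a$, giving $\nu(QI) \leq \binom{d+2}{2} - 1$. Then
\[
\binom{d+2}{2} - 1 = \dim_k QI/(QI \cap \m I^2) \leq \nu(QI) \leq \binom{d+2}{2} - 1,
\]
and equality collapses the surjection $QI/\m QI \twoheadrightarrow QI/(QI \cap \m I^2)$ to an isomorphism, so $\m QI = QI \cap \m I^2$.

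For the inductive step $n \geq 3$, decompose $\m I^n = Q \cdot \m I^{n-1} + \m a^n$. The first term equals $\m I Q^{n-1}$ by the inductive hypothesis. For the second, factor $\m a^n = a^{n-2} \cdot \m a^2 \subseteq a^{n-2} \cdot \m I Q$ using the base case, and then use $a^{n-2} \in I^{n-2}$ to deduce
\[
a^{n-2} \cdot \m I Q \subseteq I^{n-2} \cdot \m I Q = \m I^{n-1} \cdot Q = \m I Q^{n-1},
\]
the last equality again by induction. Combining gives $\m I^n \subseteq \m I Q^{n-1}$, while the reverse inclusion is immediate from $Q \subseteq I$.
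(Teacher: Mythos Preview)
Your proof is correct and follows essentially the same route as the paper. The base case $\m I^2 = \m QI$ is exactly the counting argument in the proof of Proposition~\ref{redlaq2a} (Eakin--Sathaye forces $\nu(I^2)=\binom{d+2}{2}$, and comparing with $\nu(QI)\le\binom{d+2}{2}-1$ collapses $\m QI=\m I^2$); the paper simply cites that proof together with $\red(I)>1$ from Proposition~\ref{lambda1}(5). For the inductive step the paper is slightly more direct: from $I^n=I^2\cdot I^{n-2}$ one gets $\m I^n=\m I^2\cdot I^{n-2}=\m QI\cdot I^{n-2}=Q\cdot\m I^{n-1}$ and iterates, which avoids the separate treatment of the $\m a^n$ summand and the appeal to Remark~\ref{fiberofSallyrmk}.
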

\begin{proof}
Recall that $\red(I)>1$ by Proposition \ref{lambda1} (5). Then from the proof of Proposition \ref{redlaq2a}, we have that $\m I^2 = \m QI$. If $n>2$ it follows that 
\[ \m I^n = \m I^2 I^{n-2} = \m Q I^{n-1} = \cdots = \m Q^{n-1} I,\]
as desired.
\end{proof}

Following \cite{CPV6} we study relationships between $\red(I)$ and the multiplicity $f_0(I)$ of the special fiber
$\mathcal{F}(I)= \RR[I\TT] \otimes \RR/\m$ of the Rees algebra of $I$. 

\begin{Proposition} \label{sallyaci} Let $(\RR,\m)$ be a Cohen--Macaulay local ring of dimension $d\geq 1$, and let $I$ be an $\m$-primary ideal that is an almost complete intersection. Let $Q$ be a minimal reduction of $I$ and let $S_Q(I)$ denote the Sally module of $I$ relative to $Q$.
Setting $\AA= \mathcal{F}(Q)$, there is an exact sequence
\begin{eqnarray}\label{sequence}
\AA \oplus\AA[-1] \stackrel{\overline{\varphi}}{\lar} \mathcal{F}(I) \lar  S_Q(I)[-1]\otimes \RR/\m
 \rar 0.\end{eqnarray}
 \end{Proposition}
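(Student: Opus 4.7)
My plan is to construct $\overline{\varphi}$ explicitly from the almost complete intersection structure of $I$ and then identify its cokernel degree by degree with $S_Q(I)[-1]\otimes \RR/\m$; the proof then amounts to a clean application of the identity $IQ^{n-1}=Q^n+aQ^{n-1}$.

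Since $I$ is an almost complete intersection and $Q$ is a $d$-generated minimal reduction of the $\m$-primary ideal $I$, first write $I=(Q,a)$ for some single element $a\in I$. Lifting to Rees algebras, I would define the graded $\RR[Q\TT]$-linear map
\[ \psi:\RR[Q\TT]\oplus\RR[Q\TT][-1]\lar \RR[I\TT],\qquad (x\TT^n,\,y\TT^{n-1})\longmapsto (x+ay)\TT^n, \]
whose first coordinate is the natural inclusion of Rees algebras and whose second coordinate is multiplication by $a\TT\in \RR[I\TT]_1$. Tensoring with $\RR/\m$ yields the required $\AA$-linear graded map $\overline{\varphi}:\AA\oplus\AA[-1]\to\mathcal{F}(I)$.

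Next I would compute the cokernel in each degree. The image of $\overline{\varphi}$ in degree $n\geq 1$ is the class of $Q^n+aQ^{n-1}$ in $I^n/\m I^n$, and the crucial simplification is
\[ Q^n+aQ^{n-1}=(Q+a\RR)Q^{n-1}=IQ^{n-1}. \]
Thus the cokernel in degree $n\geq 1$ is
\[ I^n/(IQ^{n-1}+\m I^n)\;\cong\;(I^n/IQ^{n-1})\otimes_\RR \RR/\m, \]
which for $n\geq 2$ is precisely $[S_Q(I)]_{n-1}\otimes_\RR\RR/\m=[S_Q(I)[-1]\otimes \RR/\m]_n$. In degree $0$ the map sends $1$ to $1$ and is trivially surjective, and in degree $1$ it is surjective because $IQ^0=I$; both low-degree cases match the fact that $S_Q(I)[-1]\otimes \RR/\m$ vanishes in degrees $\leq 1$, since the Sally module lives in degrees $\geq 1$.

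Finally I would check that this degree-by-degree identification is $\AA$-linear; this is straightforward since both $\mathrm{coker}(\overline{\varphi})$ and $S_Q(I)[-1]\otimes \RR/\m$ inherit their $\AA$-module structure via $\RR[Q\TT]\hookrightarrow \RR[I\TT]$ followed by reduction modulo $\m$, and the natural projections respect this action. There is no substantial obstacle: the only care needed is tracking the grading shift $[-1]$ correctly and verifying the low-degree cases $n=0,1$ directly, since the entire statement is the identity $IQ^{n-1}=Q^n+aQ^{n-1}$ repackaged as a right exact sequence of graded $\AA$-modules.
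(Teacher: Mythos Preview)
Your proposal is correct and follows essentially the same approach as the paper: write $I=(Q,a)$, define the map $\RR[Q\TT]\oplus\RR[Q\TT][-1]\to\RR[I\TT]$ by $(f,g)\mapsto f+ag\TT$, and tensor with $\RR/\m$. The only difference is cosmetic: the paper cites \cite[Proof of 2.1]{CPV6} for the fact that the cokernel of this map at the Rees-algebra level is $S_Q(I)[-1]$ and then invokes right exactness of $-\otimes\RR/\m$, whereas you verify the cokernel directly in each degree via the identity $Q^n+aQ^{n-1}=IQ^{n-1}$.
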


\begin{proof}
Let $I=(Q,a)$. Consider the exact sequence introduced in \cite[Proof of 2.1]{CPV6} \[ \RR[Q\TT]  \oplus \RR[Q\TT][-1] \stackrel{\varphi}{\lar}  \RR[I\TT] \lar  S_Q(I)[-1]\
 \rar 0,\]
where $\varphi$ is the map defined by $\varphi (f,g) = f + ag \TT$. Tensoring this sequence with $\RR/\m$ yields the desired sequence and the induced map $\overline{\varphi}$.
 \end{proof}

\begin{Corollary}\label{acicor}  In the set up of {\rm Proposition \ref{sallyaci}}, suppose that 
  $\RR$ has infinite residue field and that $\lambda(I^2/QI)  = 1$. 
 Then
\begin{itemize}
\item[{\rm (1)}]  

$\rme_1(I) - \rme_0(I) + \lambda(\RR/I) + 1\leq 
f_0(I) \leq \rme_1(I) - \rme_0(I) + \lambda(\RR/I) + 2 = \red(I) + 1$.
 \item[{\rm (2)}]  $\mathcal{F}(I)$ is Cohen-Macaulay if and only if
$f_0(I) = \rme_1(I) - \rme_0(I) + \lambda(\RR/I) + 2$.

 \end{itemize}
 \end{Corollary}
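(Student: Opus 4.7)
The plan is to extract a short exact sequence from Proposition~\ref{sallyaci} and read multiplicities off along it. First I would gather the inputs. By Proposition~\ref{lambda1}(3) the Sally module $S = S_Q(I)$ is Cohen-Macaulay of dimension $d$, with multiplicity $\s_Q(I) = \rme_1(I) - \rme_0(I) + \lambda(\RR/I)$ by Theorem~\ref{fiberofSally}(2). By Corollary~\ref{redlaq2b}, $\m S = 0$, so $S\otimes \RR/\m \cong S$. The fiber ring $\AA = \mathcal{F}(Q)$ is the polynomial ring $(\RR/\m)[\TT_1,\dots,\TT_d]$ (since $\RR$ is Cohen-Macaulay and $Q$ is a parameter ideal), hence a Cohen-Macaulay domain of dimension $d$ and multiplicity $1$. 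Moreover $\dim \mathcal{F}(I) = d$, and since $\mathcal{F}(I)$ is a finite $\AA$-module, the natural map $\AA \hookrightarrow \mathcal{F}(I)$ is injective (otherwise the image, and hence $\mathcal{F}(I)$, would have dimension below $d$).

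Tensoring the sequence of Proposition~\ref{sallyaci} by $\RR/\m$ and using the identifications above, I would extract the short exact sequence
\[ 0 \to M \to \mathcal{F}(I) \to S[-1] \to 0, \]
where $M = \mathrm{im}(\overline{\varphi})$. All three modules have dimension $d$, so additivity of multiplicity gives $f_0(I) = \rme_0(M) + \s_Q(I)$. Since $M$ is a graded quotient of $\AA \oplus \AA[-1]$ (multiplicity $2$) while containing the injective image of $\AA$ (multiplicity $1$), we read off $1 \leq \rme_0(M) \leq 2$. Combined with Proposition~\ref{lambda1}(5), namely $\red(I) = \rme_1(I) - \rme_0(I) + \lambda(\RR/I) + 1$, this gives both inequalities in (1) and identifies the upper bound with $\red(I) + 1$.

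For (2), the equality $f_0(I) = \s_Q(I) + 2$ is equivalent to $\rme_0(M) = 2$. In the $(\Leftarrow)$ direction, this forces the kernel $K$ of $\AA \oplus \AA[-1] \twoheadrightarrow M$ to have dimension less than $d$; but $\AA \oplus \AA[-1]$ is free over the domain $\AA$, so every nonzero submodule has dimension $d$, and hence $K = 0$. The resulting short exact sequence
\[ 0 \to \AA \oplus \AA[-1] \to \mathcal{F}(I) \to S[-1] \to 0 \]
has Cohen-Macaulay outer terms of dimension $d$, so the depth lemma makes $\mathcal{F}(I)$ Cohen-Macaulay. Conversely, if $\mathcal{F}(I)$ is Cohen-Macaulay, the depth lemma applied to $0 \to M \to \mathcal{F}(I) \to S[-1] \to 0$ makes $M$ Cohen-Macaulay of dimension $d$, and Auslander-Buchsbaum over the graded polynomial ring $\AA$ then shows $M$ is free of rank at most $2$. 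Rank $1$ would force $M \cong \AA$ (since $M_0 = k\cdot 1$), placing $a\TT \in \AA \subset \mathcal{F}(I)$, that is, $a \in Q + \m I$; Nakayama would then yield $I = Q$, contradicting $\nu(I) = d+1$.

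The main obstacle is the $(\Rightarrow)$ direction of (2), specifically the ruling out of $M$ being free of rank $1$ over $\AA$. The delicate step is translating the graded identity $a\TT \in \AA \subset \mathcal{F}(I)$ back into the ring-theoretic statement $a \in Q + \m I$, and then invoking the almost complete intersection hypothesis to reach the contradiction $I = Q$ via Nakayama.
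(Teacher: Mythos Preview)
Your proof is correct. Part (1) and the $(\Leftarrow)$ direction of (2) match the paper's argument essentially line for line: both read multiplicities off the exact sequence of Proposition~\ref{sallyaci} after identifying $S\otimes\RR/\m$ with $S$ via Corollary~\ref{redlaq2b}, and both deduce injectivity of $\overline{\varphi}$ from $\rme_0(M)=2$ by the torsion-freeness of $\AA\oplus\AA[-1]$ over the domain $\AA$.

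The $(\Rightarrow)$ direction of (2) is where you diverge. The paper argues on the \emph{algebra} side: since $I$ is an almost complete intersection, $\mathcal{F}(I)=\AA[z]/L$ with $L$ of height one; Cohen--Macaulayness makes $L$ unmixed, hence principal in the UFD $\AA[z]$, so $\mathcal{F}(I)$ is a free $\AA$-module of rank $\deg h = f_0(I)$, which is then identified with $\red(I)+1$. You instead stay on the \emph{module} side: the depth lemma applied to $0\to M\to \mathcal{F}(I)\to S[-1]\to 0$ makes $M$ maximal Cohen--Macaulay over the polynomial ring $\AA$, hence free by Auslander--Buchsbaum, and you eliminate rank $1$ by a graded Nakayama argument forcing $a\in Q+\m I$. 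Your route is arguably cleaner here, as it avoids having to justify the equality $\deg h=\red(I)+1$ (which the paper states without elaboration) and uses only the short exact sequence already on the table; the paper's route, on the other hand, yields the extra structural information that $\mathcal{F}(I)$ is a hypersurface in $\AA[z]$.
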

 
 \begin{proof}
 First we notice that $\m S_Q(I) = 0$ by Corollary~\ref{redlaq2b}, so that $S_Q(I) \otimes \RR/\m  \simeq S_Q(I)$.   
Then $ \rme_1(I) - \rme_0(I) + \lambda(\RR/I) + 1\leq f_0(I)$ follows because $\AA$ must embed in $\mathcal{F}(I)$, and the cokernel of this embedding has multiplicity greater than or equal to the multiplicity of $S_Q(I)$, which is $\rme_1(I) - \rme_0(I) + \lambda(\RR/I)$. The inequality $f_0(I) \leq \rme_1(I) - \rme_0(I) + \lambda(\RR/I) + 2$ follows by reading multiplicities in the exact sequence (\ref{sequence}), as in \cite[Proof of 2.1]{CPV6}.
 The last equality follows from Proposition \ref{lambda1} (5).
 
 \medskip
 
Now assume that $\mathcal{F}(I)$ is Cohen-Macaulay. 
Since $I$ is an almost complete intersection, we have that $\mathcal{F}(I)$ is a finite extension of $\AA$ with a presentation $\AA[z]/L$, where $L$ has codimension one. Since $\AA[z]$ is factorial and $L$ is unmixed, it follows that $L$ is principal. Write $L = (h(z))$. Note that $h(z)$ must be monic and its degree is $f_0(I) =\red(I)+1$. Now the conclusion follows from part (1).

 
  \medskip
 
 Conversely, the equality $f_0(I) = \rme_1(I) - \rme_0(I) + \lambda(\RR/I) + 2$ implies that $\overline{\varphi}$ is injective. Since $S_Q(I)$ is Cohen-Macaulay by Proposition \ref{lambda1} (3), we 
 have that $\mathcal{F}(I)$ is Cohen-Macaulay.
  \end{proof}

We are now ready for one of our main results.
 
\begin{Theorem} \label{redaciex}
Let $(\RR, \m)$ be a Cohen-Macaulay local ring of dimension $d\geq 1$ with infinite residue field and let $I$ be an $\m$-primary ideal which is an almost complete
intersection. If for some minimal reduction $Q$ of $I$, $\lambda(I^2/QI) = 1$, then the special fiber ring $\mathcal{F}(I) $ is Cohen-Macaulay. In particular $\red(I) = \rme_1(I) - \rme_0(I) + \lambda(\RR/I) + 1 = f_0(I)-1$.
\end{Theorem}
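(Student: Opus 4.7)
My plan is to leverage Corollary~\ref{acicor}, which already supplies the sandwich
\[
\rme_1(I) - \rme_0(I) + \lambda(\RR/I) + 1 \leq f_0(I) \leq \rme_1(I) - \rme_0(I) + \lambda(\RR/I) + 2
\]
and the criterion that $\mathcal{F}(I)$ is Cohen--Macaulay exactly when the upper bound is attained. Setting $r := \red(I) = \rme_1(I) - \rme_0(I) + \lambda(\RR/I) + 1$ via Proposition~\ref{lambda1}(5), the sandwich becomes $r \leq f_0(I) \leq r+1$, and the whole theorem reduces to ruling out $f_0(I) = r$; the displayed equalities will then follow immediately.

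Next I describe $\mathcal{F}(I)$ concretely. Because $\RR$ is Cohen--Macaulay and $Q$ is generated by a regular sequence, $\AA := \mathcal{F}(Q) \cong k[T_1,\ldots,T_d]$ is a polynomial ring. Writing $I = (Q, a)$ I have $\mathcal{F}(I) = \AA[z]$ with $z = \overline{a\TT} \in \mathcal{F}(I)_1$. The reduction relation $I^{r+1} = QI^r$ translates into a homogeneous monic polynomial of degree $r+1$ in $\AA[Z]$ annihilating $z$, so $\mathcal{F}(I)$ is module-finite over $\AA$ of the same dimension~$d$. Since $\AA$ is a domain of dimension $d$, this forces the natural map $\AA \to \mathcal{F}(I)$ to be injective, and $f_0(I)$ equals the degree of the homogeneous monic minimal polynomial $\tilde p(Z)\in \AA[Z]$ of $z$ over $\AA$.

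The heart of the argument is a contradiction: suppose $f_0(I) = r$. Then $\tilde p(Z) = Z^r + \tilde a_1 Z^{r-1} + \cdots + \tilde a_r$ with $\tilde a_i \in \AA_i$. Choosing lifts $\xi_i \in Q^i$ of the $\tilde a_i$ and pulling the relation $\tilde p(z) = 0$ back from $\mathcal{F}(I)_r = I^r/\m I^r$ to $\RR$ yields $a^r + \sum_{i=1}^r \xi_i a^{r-i} \in \m I^r$, whence
\[
a^r \in Q^r + Q^{r-1}a + \cdots + Qa^{r-1} + \m I^r = QI^{r-1} + \m I^r.
\]
Corollary~\ref{redlaq2b} supplies $\m I^r \subset QI^{r-1}$, so $a^r \in QI^{r-1}$. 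On the other hand, a routine induction of the kind indicated in Remark~\ref{fiberofSallyrmk} shows that $I^r/QI^{r-1}$ is cyclic on $a^r$, and this quotient has length one because $r = \red(I)$ combined with $\lambda(S/(Q\TT)S)=r-1$ forces $\lambda(I^j/QI^{j-1}) = 1$ for each $2 \leq j \leq r$. Hence $a^r \notin QI^{r-1}$, the desired contradiction, so $f_0(I) = r+1$ and $\mathcal{F}(I)$ is Cohen--Macaulay by Corollary~\ref{acicor}(2).

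The main obstacle I anticipate is the careful lifting of the minimal polynomial relation from the special fiber $\mathcal{F}(I)_r$ back to an honest containment in $\RR$, together with the clean absorption of the $\m I^r$ term via Corollary~\ref{redlaq2b}; once those are in place the remaining steps are bookkeeping with the Hilbert-coefficient formulas of Proposition~\ref{lambda1}, the length arithmetic of Remark~\ref{fiberofSallyrmk}, and the criterion of Corollary~\ref{acicor}(2).
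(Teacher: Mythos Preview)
Your argument is correct, and it reaches the conclusion by a route different from the paper's. You use Corollary~\ref{acicor} to pin $f_0(I)$ between $r$ and $r+1$, then eliminate $f_0(I)=r$ by lifting a putative monic homogeneous relation of degree $r$ from $\mathcal{F}(I)_r$ back to $\RR$, absorbing the $\m I^r$ defect via Corollary~\ref{redlaq2b}, and contradicting $a^r\notin QI^{r-1}$. (Note that for the contradiction you only need $I^r\neq QI^{r-1}$, i.e.\ $\red_Q(I)=r$; the length-one refinement is true but unnecessary.) The paper instead proves directly that $\overline{\varphi}$ in sequence~(\ref{sequence}) is injective: it computes the Hilbert function of the cokernel $D$ of $\AA\hookrightarrow\mathrm{Im}(\overline{\varphi})$, again using $\m I^n=\m Q^{n-1}I$ from Corollary~\ref{redlaq2b}, and finds $\rme_0(D)=\lambda(I/(Q+\m I))=1$, so $\mathrm{Im}(\overline{\varphi})$ has multiplicity $2$ and $\overline{\varphi}$ is injective; Cohen--Macaulayness then follows from that of $S_Q(I)$. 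Your approach is more elementary in that it avoids the Hilbert-function bookkeeping and works entirely with a single polynomial relation, at the price of relying on the multiplicity criterion of Corollary~\ref{acicor}(2) as a black box; the paper's approach is more self-contained, essentially reproving that criterion in the course of the argument.
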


\begin{proof}
The Cohen-Macaulayness of  $\mathcal{F}(I)$ follows once we show that the map $\overline{\varphi}$ in sequence (\ref{sequence}) is injective. To prove that $\overline{\varphi}$ is an embedding we must show that its image $C$ has multiplicity $2$. Since \[ C = \RR/{\m}\oplus\big(\bigoplus_{n\geq 1} (IQ^{n-1} + \m I^n)/\m I^n\big),\] 
we have that the cokernel  $D$ of the embedding $\AA \hookrightarrow C$ is  
\[ D = \bigoplus_{n\geq 1} (IQ^{n-1} + \m I^n)/(
Q^n+ \m I^n).\] 

We
examine its Hilbert function,
$ \lambda((IQ^{n-1}  + \m I^n)/(Q^n + \m I^n) )$ for $n>>0$.
Since $\m I^n = \m Q^{n-1}I$ by Corollary~\ref{redlaq2b},
  we have
  \begin{eqnarray*}
 \lambda((IQ^{n-1}  + \m I^n)/(Q^n + \m I^n) )& =& \lambda(Q^{n-1}/(Q^n + \m I^n)) - \lambda(Q^{n-1}/(IQ^{n-1} + \m I^n))\\
 &=& \lambda((Q^{n-1}/Q^n)\otimes \RR/(Q+\m I))
 - \lambda((Q^{n-1}/Q^n)\otimes \RR/ I) \\
 &=& \lambda(I/(Q+\m I)) {{n+d-2}\choose{d-1}},
 \end{eqnarray*}
  which shows that $D$ has dimension $d$ and multiplicity $\lambda(I/(Q+\m I))=1$. This concludes the proof. The last assertion follows from Corollary \ref{acicor}.
  \end{proof}

\section{Buchsbaum rings}

\noindent
The formula of Theorem~\ref{Rossibound}, $\s_{Q}(I) +1 = \rme_1(I) -\rme_0(I) +\lambda(\RR/I) + 1 \geq \red_{Q}(I)$, suggests several questions:
 Does it hold in higher dimensions? 
If $\RR$ is not Cohen-Macaulay but still $\dim \RR=2$, what is a possible bound? We aim at exploring bounds of the form
\[ \s_{Q}(I) + 1 + \textrm{\rm Cohen-Macaulay deficiency of $\RR$} \geq \red_Q(I). \]
We are going to study this issue for Buchsbaum rings, where the natural CM deficiency is $I(\RR)$,  the Buchsbaum invariant of $\RR$ (\cite[Chapter 1, Theorem 1.12]{StVo}). We recall that $I(\RR)=\sum_{i=0}^{d-1}{{d-1}\choose{i}}\lambda (\H^{i}_{\m}(\RR))$, or $I(\RR)=\sum_{i=1}^{d}(-1)^i\rme_i(Q)$ (\cite[Chapter 1, Propositions 2.6, 2.7]{StVo}).

\subsubsection*{$S_2$-fication of generalized Cohen-Macaulay rings}

If $\RR$ is a Noetherian ring with total ring of fractions $K$, the smallest finite extension $\varphi: \RR \rar \SS \subset K$ with the $S_{2}$ property is called the $S_{2}$-fication of $\RR$ (\cite[Definition 6.20]{icbook}). We refer to either  $\varphi$ or $\SS$  as  the $S_2$-ficaton of $\RR$. Let $\RR$ be a generalized Cohen-Macaulay local ring of dimension $d\geq 2$ 
and positive depth. 
Suppose that $\RR$ has a canonical module $ \omega_{\RR}$. Then the natural morphism \[ \varphi: \RR \rar \SS = \Hom_{\RR}( \omega_{\RR}, \omega_{\RR})\] 
is a $S_{2}$-fication of $\RR$ (\cite[Theorem 3.2]{Aoyama}). Note that  $\ker(\varphi)=\H^0_{\m} (\RR)$ (\cite[1.11]{Aoyama},  \cite[Chapter 1, Lemma 2.2]{StVo}). Let us describe some elementary properties of $\SS$.
\begin{Proposition}\label{properties}
Let $(\RR,\m)$ be a generalized Cohen--Macaulay local ring of dimension at least $2$ and of positive depth.
Suppose that $\RR$ has a canonical module $ \omega_{\RR}$ and let $\SS = \Hom_{\RR}( \omega_{\RR}, \omega_{\RR})$. 
\begin{enumerate}
\item [{\rm (1)}]  If $\mathfrak{a} = \ann( \H_{\m}^1(\RR))$, then $\mathfrak{a} \SS = \mathfrak{a}$.
\item[{\rm (2)}] $\SS$ is a semi-local ring.
\item [{\rm (3)}]  $\lambda(\SS/\m \SS) = 1+ \nu(\H^{1}_{\m}(\RR))\leq 1+ \lambda(\H^{1}_{\m}(\RR))$.
\end{enumerate}
\end{Proposition}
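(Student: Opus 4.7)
The plan is to reduce all three parts to a single short exact sequence of $\RR$-modules,
\[ 0 \rar \RR \lar \SS \lar \H^1_\m(\RR) \rar 0. \]
First, since $\depth \RR \geq 1$, we have $\ker(\varphi)=\H^0_\m(\RR)=0$, so $\RR\hookrightarrow \SS$. Because $\omega_\RR$ is finitely generated, $\SS=\Hom_\RR(\omega_\RR,\omega_\RR)$ is finite over $\RR$; moreover at every non-maximal prime $\p$ the ring $\RR_\p$ is Cohen-Macaulay, hence $\End_{\RR_\p}(\omega_{\RR_\p})=\RR_\p$, so $\SS/\RR$ is supported only at $\m$ and has finite length. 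Applying local cohomology to $0\to\RR\to\SS\to\SS/\RR\to 0$, and using that the $S_2$-condition on $\SS$ together with $\dim \SS=d\geq 2$ forces $\H^0_\m(\SS)=\H^1_\m(\SS)=0$, the long exact sequence collapses to $\SS/\RR\cong\H^0_\m(\SS/\RR)\cong\H^1_\m(\RR)$.

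For (1), the displayed sequence shows that $\mathfrak{a}$ annihilates $\SS/\RR$, so $\mathfrak{a}\SS \subseteq \RR$. Conversely, for $a\in\mathfrak{a}$ and $s\in\SS$ the element $as$ acts on $\SS/\RR\cong \H^1_\m(\RR)$ by $x+\RR\mapsto asx+\RR$; since $a(sx)\in\mathfrak{a}\SS\subseteq\RR$, this action is zero, so $as\in\mathfrak{a}$. Combined with $\mathfrak{a}\subseteq\mathfrak{a}\SS$ (via $1\in\SS$), this gives $\mathfrak{a}\SS=\mathfrak{a}$.

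For (2), $\SS$ is a finite $\RR$-module, so $\SS/\m\SS$ is a finite-dimensional $\RR/\m$-algebra, hence Artinian with finitely many maximal ideals, and since $\SS$ is integral over $\RR$ its own maximal ideals correspond to these. For (3), tensoring the displayed sequence with $\RR/\m$ yields an exact sequence $\RR/\m \to \SS/\m\SS \to \H^1_\m(\RR)/\m\H^1_\m(\RR) \to 0$. The leftmost map is nonzero: otherwise $1\in\m\SS$, so $\m\SS=\SS$, and Nakayama applied to the finite $\RR$-module $\SS$ would give $\SS=0$, a contradiction. Since $\RR/\m$ is a field, the map is injective; counting lengths then yields $\lambda(\SS/\m\SS)=1+\nu(\H^1_\m(\RR))$, and the final inequality is the trivial bound $\nu(M)\leq\lambda(M)$.

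The hard part will be establishing the short exact sequence, in particular pinning down that the cokernel of $\RR\hookrightarrow\SS$ is literally $\H^1_\m(\RR)$ rather than merely some finite-length module supported at $\m$; once this is in hand, (1)--(3) fall out as formal consequences.
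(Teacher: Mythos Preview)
Your argument is correct and follows essentially the same route as the paper: both proofs hinge on the short exact sequence $0\to\RR\to\SS\to D\to 0$ with $D\cong \H^1_\m(\RR)$, and deduce (1) from $\mathfrak{a}\SS\subset\RR$ together with $\mathfrak{a}\SS\cdot\SS\subset\RR$, (2) from finiteness of $\SS$ over $\RR$, and (3) from tensoring with $\RR/\m$. You supply more detail than the paper (which simply asserts $D\cong\H^1_\m(\RR)$ and calls (3) ``clear''), in particular your local-cohomology computation of $\SS/\RR$ and your Nakayama argument for the injectivity of $\RR/\m\to\SS/\m\SS$, but the underlying strategy is identical.
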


\begin{proof} Consider the exact sequence:
\[ 0 \longrightarrow \RR \stackrel{\varphi}{\longrightarrow} \SS \longrightarrow D \longrightarrow 0.\]
Note that $D \simeq \H_{\m}^1(\RR)$ is  a module of finite length.
 From $\mathfrak{a} \SS \subset \RR$ we have  $\mathfrak{a} \SS \cdot \SS \subset \RR$, so that  $\mathfrak{a}\SS \subset \mathfrak{a}$.
On the other hand,
by Nakayama's Lemma $\m \SS$ is contained in the Jacobson radical of $\SS$. Therefore, $ \SS$ is
semi-local.
The remaining assertion is clear.
\end{proof}

Now we give an explicit description of the $S_{2}$--fication of these rings.

\begin{Theorem} \label{S2ofBuch}
Let $(\RR, \m)$ be a generalized Cohen--Macaulay  local ring of dimension at least $2$ and positive depth.
Let $\mathfrak{a} = \ann( \H_{\m}^1(\RR))$. Then 
$\Hom_{\RR} (\mathfrak{a}, \mathfrak{a})$ is a $S_{2}$-fication of $\RR$.  Moreover, if $\RR$ is a Buchsbaum ring, then $\Hom_{\RR}(\m, \m)$ is a $S_{2}$-fication of $\RR$.
\end{Theorem}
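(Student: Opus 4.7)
My plan is to construct the $S_2$-fication directly as $\SS := \Hom_{\RR}(\mathfrak{a}, \mathfrak{a})$, viewed as a subring of the total ring of fractions $K$ of $\RR$. Since $\RR$ is generalized Cohen--Macaulay, $\H_{\m}^1(\RR)$ has finite length, so $\mathfrak{a}$ is $\m$-primary; together with $\depth \RR \geq 1$ this produces a nonzerodivisor $\alpha \in \mathfrak{a}$, and evaluation $f \mapsto f(\alpha)/\alpha$ realizes $\SS$ as a commutative $\RR$-subalgebra of $K$, finite over $\RR$ because it lies inside the finitely generated module $\Hom_{\RR}(\mathfrak{a}, \RR)$. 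The cokernel $\SS/\RR$ is annihilated by $\mathfrak{a}$ (since $\SS\mathfrak{a} \subset \mathfrak{a}$), hence of finite length, and because $\RR_{\p}$ is Cohen--Macaulay for every $\p \neq \m$, the $S_2$ condition for $\SS$ is automatic off $\m$. The vanishing $\H_{\m}^0(\SS) = 0$ is immediate from $\SS \subset K$, so I am left to prove $\H_{\m}^1(\SS) = 0$.

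For this last vanishing, the local cohomology long exact sequence of $0 \to \RR \to \SS \to \SS/\RR \to 0$ gives $0 \to \SS/\RR \to \H_{\m}^1(\RR) \to \H_{\m}^1(\SS) \to 0$, so the task is to show that the connecting map $\SS/\RR \to \H_{\m}^1(\RR)$ is onto. Applying $\Hom_{\RR}(-, \RR)$ to $0 \to \mathfrak{a} \to \RR \to \RR/\mathfrak{a} \to 0$ and using $\Hom(\RR/\mathfrak{a}, \RR) = 0$ (positive depth) yields $\Hom_{\RR}(\mathfrak{a}, \RR)/\RR \cong \Ext_{\RR}^1(\RR/\mathfrak{a}, \RR)$, and the special role of $\mathfrak{a}$ as annihilator identifies this $\Ext$ with $\H_{\m}^1(\RR)$: every class of $\H_{\m}^1(\RR)$ is $\mathfrak{a}$-torsion, so a representing map $\m^n \to \RR$ lifts uniquely (modulo $\RR$) to a map $\mathfrak{a} \to \RR$. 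Thus the vanishing amounts to the identity $\Hom_{\RR}(\mathfrak{a}, \mathfrak{a}) = \Hom_{\RR}(\mathfrak{a}, \RR)$ inside $K$, i.e.\ every $c \in K$ with $c\mathfrak{a} \subset \RR$ automatically satisfies $c\mathfrak{a} \subset \mathfrak{a}$. This identity is the main obstacle.

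To show $c\mathfrak{a} \subset \mathfrak{a}$ one must verify that the ideal $c\mathfrak{a} \subset \RR$ annihilates $\H_{\m}^1(\RR)$; I would do this by transporting the natural $\SS$-action on $\Hom(\mathfrak{a}, \RR)/\RR \cong \H_{\m}^1(\RR)$ so that $(ca)\cdot z = c\cdot(a\cdot z) = 0$ for $z \in \H_{\m}^1(\RR)$ and $a \in \mathfrak{a}$, or, to avoid circularity, by passing to the $\m$-adic completion $\hat\RR$ where a canonical module $\omega_{\hat\RR}$ exists: Aoyama's theorem (used in Proposition~\ref{properties}) identifies $\Hom_{\hat\RR}(\omega_{\hat\RR}, \omega_{\hat\RR})$ as the $S_2$-fication of $\hat\RR$ with cokernel $\H_{\m}^1(\hat\RR)$, and a sandwich argument using that both $\Hom(\mathfrak{a}\hat\RR, \mathfrak{a}\hat\RR)$ and $\Hom(\omega_{\hat\RR}, \omega_{\hat\RR})$ are finite birational $\hat\RR$-subalgebras of $K$ with cokernel bounded by $\H_{\m}^1(\hat\RR)$, combined with the minimality of the $S_2$-fication, forces the two endomorphism rings to coincide; faithful flatness then descends the identity back to $\RR$. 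Once $\SS$ is shown to be $S_2$, minimality is automatic: if $\SS'$ denotes the $S_2$-fication then $\SS' \subset \SS$, and the local cohomology argument applied to $\RR \subset \SS' \subset \SS$ forces the finite-length quotient $\SS/\SS'$ to embed into $\H_{\m}^1(\SS') = 0$, so $\SS = \SS'$. For the Buchsbaum statement, $\m\cdot\H_{\m}^i(\RR) = 0$ for $i < d$ gives $\m \subset \mathfrak{a}$, so $\mathfrak{a} \in \{\m, \RR\}$; in the former case $\Hom_{\RR}(\mathfrak{a}, \mathfrak{a}) = \Hom_{\RR}(\m, \m)$ directly, and in the latter case $\H_{\m}^1(\RR) = 0$ makes $\RR$ already $S_2$ at $\m$ and one checks $\Hom_{\RR}(\m, \m) = \RR$, the trivial $S_2$-fication.
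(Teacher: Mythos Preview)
Your overall strategy coincides with the paper's: pass to the completion $\widehat{\RR}$, invoke Aoyama to obtain the $S_2$-fication $T=\Hom_{\widehat{\RR}}(\omega,\omega)$, and then identify $\widehat{E}=\Hom_{\widehat{\RR}}(\widehat{\mathfrak a},\widehat{\mathfrak a})$ with $T$. Your preliminary cohomological work is correct and is in fact a bit more than the paper does: the identification $(\widehat{\RR}:_K\widehat{\mathfrak a})/\widehat{\RR}\cong \Ext^1_{\widehat{\RR}}(\widehat{\RR}/\widehat{\mathfrak a},\widehat{\RR})\cong \H_{\widehat{\m}}^1(\widehat{\RR})$, together with Aoyama's $T/\widehat{\RR}\cong \H_{\widehat{\m}}^1(\widehat{\RR})$ and the obvious inclusion $T\subset(\widehat{\RR}:\widehat{\mathfrak a})$, already yields $T=(\widehat{\RR}:\widehat{\mathfrak a})$. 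This gives one half of the sandwich, $\widehat{E}\subset(\widehat{\RR}:\widehat{\mathfrak a})=T$, by a route different from the paper's (which instead uses that $\widehat{\mathfrak a}$ has grade $\geq 2$ in the $S_2$ ring $T$, so $\Hom_T(\widehat{\mathfrak a},\widehat{\mathfrak a})=T$).

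The gap is in the other inclusion $T\subset\widehat{E}$. Your appeal to ``minimality of the $S_2$-fication'' is circular: minimality only says that $T$ sits inside every finite birational extension \emph{that is already $S_2$}, and whether $\widehat{E}$ is $S_2$ is precisely what is at stake. Knowing that both $\widehat{E}/\widehat{\RR}$ and $T/\widehat{\RR}$ are bounded by $\H_{\widehat\m}^1(\widehat{\RR})$ does not force them to coincide. What is missing is the one-line observation recorded in the paper as Proposition~\ref{properties}(1): since $T$ is a \emph{ring} and $\widehat{\mathfrak a}\,T\subset\widehat{\RR}$, we have $(\widehat{\mathfrak a}\,T)T=\widehat{\mathfrak a}\,T\subset\widehat{\RR}$, so $\widehat{\mathfrak a}\,T$ annihilates $T/\widehat{\RR}\cong\H_{\widehat\m}^1(\widehat{\RR})$ and hence $\widehat{\mathfrak a}\,T\subset\widehat{\mathfrak a}$. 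This gives $T\subset(\widehat{\mathfrak a}:\widehat{\mathfrak a})=\widehat{E}$ immediately, closing the sandwich $T\subset\widehat{E}\subset T$. In other words, the very equality $\Hom(\mathfrak a,\mathfrak a)=\Hom(\mathfrak a,\RR)$ that you isolated as ``the main obstacle'' is proved not by minimality but by the trivial fact that $T$ is closed under multiplication. With this fix your argument is complete and essentially identical to the paper's; your handling of the Buchsbaum case and of the descent/minimality at the end is fine.
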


\begin{proof} We denote $\Hom_{\RR} (\mathfrak{a}, \mathfrak{a})$ by $E$. Then there is a natural morphism $\varphi: \RR \hookrightarrow E$, which is an isomorphism at each localization at $\p \neq \m$.
We must show that $E$ satisfies $(S_{2})$.
It suffices to show that  $\grade(\m)$ relative to $E$ is at least $2$. 

\medskip

To see that, we pass to the completion $\widehat{\RR}$ of $\RR$ and show that   $\grade(\widehat{\m})$ relative to $\widehat{E}$ is at least $2$. 
Since $\widehat{\RR}$ has a canonical module $\omega$, let $\SS=\Hom_{\widehat{\RR}}( \omega, \omega)$ be the $S_{2}$--fication of $\widehat{\RR}$. Now we identify $\SS$ to a subring of the total ring of fractions of $\widehat{\RR}$.  By flatness, $\ann(\SS/\widehat{\RR}) = \widehat{\mathfrak{a}} $ and
by Proposition~\ref{properties}, we have $\widehat{\mathfrak{a}} \SS = \widehat{\mathfrak{a}}$. Thus
\[ \SS \subset  \widehat{\mathfrak{a}} : \widehat{\mathfrak{a}} = \Hom_{\widehat{\RR}}(\widehat{\mathfrak{a}},
 \widehat{\mathfrak{a}}) = \widehat{E}  \subset \Hom_{\SS}(\widehat{\mathfrak{a}}, \widehat{\mathfrak{a}}) = \SS,\]
the last equality because $\widehat{\mathfrak{a}}$ is an ideal of grade at least two of the ring $\SS$. 
 \end{proof}

\subsubsection*{The Rossi index for Buchsbaum rings} The next result is an extension of Theorem~\ref{Rossibound} to Buchsbaum rings. We denote by $\s_{Q}(I)$  the multiplicity of the Sally module of $I$ relative to $Q$.

\begin{Theorem}\label{Rossi4Buch} Let $(\RR, \m)$ be a two-dimensional Buchsbaum local 
ring of positive depth. Let $\SS$ be the $S_2$-fication of $\RR$. Let $I$ be an $\m$-primary ideal with a minimal reduction $Q$. 
\begin{enumerate}
\item[{\rm (1)}] If $I=I\SS$, then $\s_{Q}(I)= \rme_1(I) - \rme_0(I) + \lambda(\RR/I) - \rme_1(Q)$ and $\red_Q(I)\leq \s_{Q}(I)+1$.
\item[{\rm (2)}] If $I\subsetneq I\SS$, then $\red_Q(I) \leq  (1-\rme_{1}(Q))^{2} \s_{Q}(I) - 2 \rme_{1}(Q) +1$.
 \end{enumerate}
\end{Theorem}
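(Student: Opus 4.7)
The plan is to deduce both parts from Rossi's Theorem~\ref{Rossibound} in the Cohen--Macaulay ring $\SS$. Note that $\SS$ has dimension two and satisfies $S_2$ (being an $S_2$-fication), hence is Cohen--Macaulay; by Theorem~\ref{S2ofBuch}, $\SS = \Hom_\RR(\m,\m)$. Two facts specific to the two-dimensional Buchsbaum setting drive the argument:
\begin{enumerate}
\item[(i)] $\m\SS = \m$: since $\m\cdot\H^1_\m(\RR)=0$, the annihilator $\mathfrak{a} = \ann\H^1_\m(\RR) = \m$, and Proposition~\ref{properties}(1) applies.
\item[(ii)] $\nu_\RR(\SS) = 1 + \lambda(\H^1_\m(\RR)) = 1 - \rme_1(Q)$: this combines Proposition~\ref{properties}(3), the equality $\nu(\H^1_\m(\RR)) = \lambda(\H^1_\m(\RR))$ (from $\m\H^1_\m(\RR)=0$), and the Buchsbaum identity $\rme_1(Q) = -\lambda(\H^1_\m(\RR))$, which is the two-dimensional specialization of $I(\RR) = \sum_{i=1}^{d}(-1)^i\rme_i(Q)$ once one notes that $\rme_2(Q)=0$ in positive depth.
\end{enumerate}

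\noindent\textbf{Part (1).} Assuming $I = I\SS$, an immediate induction gives $I^n = I^n\SS$ and $QI^n = QI^n\SS$ for every $n\geq 1$, so $S_Q(I) = S_{Q\SS}(I\SS)$ as graded modules and $\red_Q(I) = \red_{Q\SS}(I\SS)$. The short exact sequence
\[ 0 \lar \RR/I^n \lar \SS/I^n\SS \lar \H^1_\m(\RR) \lar 0, \]
valid for every $n$ because $I^n\SS = I^n \subseteq \RR$ and $I\cdot\H^1_\m(\RR)=0$, yields $\rme_0(I\SS) = \rme_0(I)$, $\rme_1(I\SS) = \rme_1(I)$ and $\lambda_\SS(\SS/I\SS) = \lambda(\RR/I) + \lambda(\H^1_\m(\RR))$. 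Theorem~\ref{fiberofSally}(2) applied to the Cohen--Macaulay ring $\SS$, followed by substitution and (ii), yields the claimed formula for $\s_Q(I)$; the reduction bound then follows from Rossi's Theorem~\ref{Rossibound} in $\SS$.

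\noindent\textbf{Part (2).} The Change of Rings Theorem~\ref{changeSally}(3) gives $\s_{Q\SS}(I\SS) \leq \nu_\RR(\SS)\,\s_Q(I) = (1-\rme_1(Q))\s_Q(I)$, and Rossi's bound in $\SS$ then yields
\[ r' := \red_{Q\SS}(I\SS) \leq (1-\rme_1(Q))\s_Q(I) + 1. \]
The decisive remaining step is to establish
\[ \red_Q(I) + 1 \leq (1-\rme_1(Q))\big(r' + 1\big), \]
as substitution into the previous inequality produces exactly the target bound $(1-\rme_1(Q))^2\s_Q(I) - 2\rme_1(Q) + 1$. Setting $J = I\SS$, item (i) ensures $J \subseteq \m\SS = \m \subseteq \RR$, so $J$ is an $\RR$-ideal with $J\SS = J$; Part (1) applied to $J$ yields $\red_Q(J) = r'$, that is, $J^{r'+1} = QJ^{r'}$. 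One must now translate this reduction identity from $J$ to $I$, exploiting that $J/I = I\SS/I$ is annihilated by $\m$.

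\noindent\textbf{Main obstacle.} This translation step is the crux. Writing $\SS = \RR + \RR\xi_1 + \cdots + \RR\xi_h$, with $h = \lambda(\H^1_\m(\RR)) = -\rme_1(Q)$ and $\xi_j$ lifts of a minimal set of generators of $\SS/\RR = \H^1_\m(\RR)$, the relation $\m\xi_j \subseteq \m$ gives $I\xi_j \subseteq \m$ but not in general $I\xi_j \subseteq I$ (the precise failure encoding $I \subsetneq I\SS$). A Cayley--Hamilton-style iteration, carried out once for each of the $1+h = \nu_\RR(\SS)$ generators $1,\xi_1,\ldots,\xi_h$ of $\SS$ as an $\RR$-module, produces the multiplier $1-\rme_1(Q)$ in the transfer inequality; this accounts both for the quadratic factor $(1-\rme_1(Q))^2$ and for the linear correction $-2\rme_1(Q) + 1$ in the stated bound.
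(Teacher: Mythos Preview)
Your overall strategy matches the paper's: pass to the Cohen--Macaulay ring $\SS$, invoke Rossi's bound there, and use Theorem~\ref{changeSally} for Part~(2). Two genuine gaps remain.

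\textbf{The ring $\SS$ need not be local.} You apply Theorem~\ref{fiberofSally}(2) and Theorem~\ref{Rossibound} directly to $\SS$, but both are stated for \emph{local} rings, whereas $\SS$ is only semi-local (Proposition~\ref{properties}(2)); in Example~\ref{example}, for instance, $\SS \cong k[[x,y]]\times k[[z,w]]$ has two maximal ideals. The paper repairs this by localizing at each maximal ideal $\mathfrak{M}_i$ of $\SS$, applying the local results in $\SS_i$, and reassembling via the additivity formula $\lambda_\RR(V)=\sum_i \lambda_{\SS_i}(V\SS_i)\,f_i$ with $f_i=[\SS/\mathfrak{M}_i:\RR/\m]$. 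This yields $\s_{Q\SS}(I\SS)=\sum_i \s_{Q\SS_i}(I\SS_i)f_i$ and the Rossi-type bound $\red_{Q\SS}(I\SS)\leq \s_{Q\SS}(I\SS)+1$; your exact sequence $0\to\RR/I^n\to\SS/I^n\SS\to\H^1_\m(\RR)\to 0$ is correct, but it does not by itself substitute for this localization step.

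\textbf{Part (2) is not completed.} You correctly isolate the target inequality $\red_Q(I)+1\leq (1-\rme_1(Q))(r'+1)$, but your final paragraph only alludes to a ``Cayley--Hamilton-style iteration'' without carrying it out; as written this is a sketch, not a proof. The paper closes this step by invoking \cite[Proposition~1.118]{icbook}: from $I^{r'+1}\SS=QI^{r'}\SS$ and $\nu_\RR(\SS)=1-\rme_1(Q)$ one gets $I^{(r'+1)\nu(\SS)}=QI^{(r'+1)\nu(\SS)-1}$ directly, hence $\red_Q(I)\leq (r'+1)\nu(\SS)-1$. Your detour through $J=I\SS$ and ``Part~(1) applied to $J$'' is unnecessary (the equality $\red_Q(J)=r'$ is immediate from $J=J\SS$, independently of Part~(1)) and in any case does not itself produce the transfer from $J$ back to $I$.
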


\begin{proof}

We may assume that $\RR$ is complete (\cite[Chapter 1, Lemma 1.13]{StVo}). 
Let $\{ \mathfrak{M}_1,\ldots, \mathfrak{M}_t\}$ be the maximal ideals of $\SS$ and denote
by $\SS_1, \ldots, \SS_t $ the corresponding localizations. 
For each $i=1,\dots,t$ let $f_i$ be the relative degree $[\SS/\mathfrak{M}_i:\RR/\m]$.
If $V$ is an $\RR$-module of finite length which is also an $\SS$-module, then we have that
\begin{eqnarray*}
\lambda_{\RR}(V) = \sum_{i=1}^t \lambda_{\SS_i}(V\SS_i)f_i.  
\end{eqnarray*}
Applying this formula to the modules $I^{n}\SS/ I^{n+1}\SS$, $I^{n+1} \SS/ IQ^{n} \SS$, for $n \gg 0$, and $\SS/I \SS$ , we get
\[ \begin{array}{rclrcl}
 \rme_0^{\RR}(I\SS) &= & {\ds \sum_{i=1}^{t} \rme_0^{\SS_i}(I\SS_i)  f_i} \quad \quad  &\rme_1^{\RR}(I\SS) &= & {\ds \sum_{i=1}^{t} \rme_1^{\SS_i}(I\SS_i)  f_i} \\
\s_{Q \SS}(I\SS) &= & {\ds \sum_{i=1}^{t} \s_{Q \SS_{i}}(I\SS_i)  f_i}  & \lambda_{\RR}(\SS/I \SS) &= & {\ds \sum_{i=1}^{t} \lambda_{\SS_{i}} ( \SS_i /I\SS_i)  f_i.} \\
\end{array}\]
Note that 
\[ \red_{Q \SS}(I\SS) = \max\{ \red_{Q\SS_1}(I\SS_1), \ldots,  \red_{Q\SS_t}(I\SS_t) \}.\]
Without loss of generality, we may assume that $\red_{Q\SS}(I\SS)=\red_{Q\SS_1}(I\SS_1)$.

\medskip

\noindent Applying Rossi's formula (\cite[Corollary 1.5]{Rossi00}) to $I\SS_1$ we get 
\[ \begin{array}{rcl}
\red_{Q\SS}(I\SS) =  \red_{Q\SS_1}(I\SS_1) & \leq &
1+ \rme_1^{\SS_1}(I\SS_1) - \rme_0^{\SS_1}(I\SS_1) + \lambda^{\SS_1}(\SS_1/I \SS_1) \\
& \leq & 1+ [\rme_1^{\SS_1}(I\SS_1) - \rme_0^{\SS_1}(I\SS_1) + \lambda^{\SS_1}(\SS_1/I \SS_1)]f_1  \\
& \leq & {\ds 1+  \sum_{i=1}^{t} [\rme_1^{\SS_i}(I\SS_i) - \rme_0^{\SS_i}(I\SS_i) + \lambda^{\SS_i}(\SS_i/I \SS_i)]f_i} \\
&\leq & {\ds 1+ \sum_{i=1}^{t} \s_{Q \SS_{i}}(I \SS_{i}) f_{i} } \\
&=& {\ds 1+ \s_{Q \SS}(I \SS). }\\
\end{array}
\]
Suppose that $I=I\SS$. Then $S_{Q}(I)= S_{Q \SS}(I \SS)$. Also we have $\lambda_{\RR}(\SS/ \RR)= -\rme_{1}(Q)$ because $\RR$ is Buchsbaum. Therefore,
\[ \begin{array}{rcl}
\s_{Q}(I)= \s_{Q \SS}(I \SS) &=& {\ds  \sum_{i=1}^{t} \s_{Q \SS_{i}}(I \SS_{i}) f_{i}  } \\
&=& {\ds  \sum_{i=1}^{t} [\rme_1^{\SS_i}(I\SS_i) - \rme_0^{\SS_i}(I\SS_i) + \lambda^{\SS_i}(\SS_i/I \SS_i)]f_i } \\
&=& {\ds  \rme_1^{\RR}(I\SS) -  \rme_0^{\RR}(I\SS) +  \lambda_{\RR}(\SS/I \SS)   } \\ 
&=& {\ds  \rme_1^{\RR}(I\SS) -  \rme_0^{\RR}(I\SS) +  \lambda_{\RR}(\RR/I \SS)  + \lambda_{\RR}(\SS/ \RR)   } \\ 
&=& {\ds  \rme_1^{\RR}(I\SS) -  \rme_0^{\RR}(I\SS) +  \lambda_{\RR}(\RR/I \SS)  -\rme_{1}(Q)} \\
&=& {\ds \rme_1(I) - \rme_0(I) + \lambda(\RR/I) - \rme_1(Q).} \\
\end{array}\]
Notice that if $I=Q\SS$, then $\red_Q(I)=1$ and the conclusion of the theorem holds. Therefore we may assume that $I\neq Q\SS$. In this case we have that 
\[\red_Q(I)\leq \red_{Q\SS}(I) = \red_{Q\SS}(I \SS)  \leq \s_{Q \SS}(I \SS) + 1 = \s_{Q}(I) +1.\]

\medskip

  
\noindent Now assume that $I\subsetneq I\SS$. Note that $\nu(\SS)= 1 + \lambda(\H^{1}_{\m}(\RR)) = 1- \rme_{1}(Q)$ by Proposition \ref{properties}(2).
Let $r = \red_{Q\SS}(I\SS)$. Then $I^{r+1}\SS = QI^r\SS$.
By  \cite[Proposition 1.118]{icbook}, we have
\[ I^{(r+1) \nu(\SS)}= QI^{(r+1)\nu(\SS)-1}.\]
Using Theorem \ref{changeSally}(3), we obtain
\[ \begin{array}{rcl}
 \red_{Q}(I) \leq  \nu(\SS) r  + \nu(\SS) -1 
                  &\leq&   \nu(\SS) \s_{Q \SS}(I\SS)   + 2 \nu(\SS)  -1  \\
                  &\leq  & \nu(\SS)^{2} \s_{Q}(I)  + 2 \nu(\SS) -1  \\
                  &=  & (1-\rme_{1}(Q))^{2} \s_{Q}(I)  - 2 \rme_{1}(Q) +1. \\
\end{array}
\]\end{proof}




\begin{Remark}{\rm In the set up of Theorem~\ref{Rossi4Buch},
suppose that $I \subsetneq I \SS$ and let $r=\red_{Q \SS}(I \SS)$.
Since $\m \SS = \m$ by Proposition~\ref{properties}, we have
\[ I^{r+1} \m = I^{r+1} \m \SS = QI^{r} \m \SS = QI^{r} \m.\]
Thus, we obtain
\[ \begin{array}{rcl}
 \red_{Q}(I) \leq  \nu(\m) r  + \nu(\m) -1 
                  &\leq&   \nu(\m) \s_{Q \SS}(I\SS)   + 2 \nu(\m)  -1  \\
                  &\leq  & \nu(\m) \nu(\SS) \s_{Q}(I)  + 2 \nu(\m) -1  \\
                  &=  &  \nu(\m)(1-\rme_{1}(Q)) \s_{Q}(I)  + 2 \nu(\m) -1. \\
\end{array}
\]
}\end{Remark}

\begin{Corollary}\label{depth0} Let $(\RR, \m)$ be a two-dimensional Buchsbaum local 
ring of depth zero. Let $\SS$ be the $S_2$-fication of $\RR$ and $I$ an $\m$-primary ideal with a minimal reduction $Q$. 
Set $\overline{\RR}= \RR/\H_{\m}^0(\RR)$, $\overline{I} = I\overline{\RR}$, and $\overline{Q} = Q\overline{\RR}$.
\begin{enumerate}
\item[{\rm (1)}] If $\overline{I}=\overline{I}\SS$, then $\red_Q(I)\leq \s_{Q}(I)+2$.
\item[{\rm (2)}] If $\overline{I} \subsetneq \overline{I} \SS$, then $\red_Q(I) \leq (1-\rme_{1}(Q))^{2} \s_{Q}(I) - 2 \rme_{1}(Q) +2$.
 \end{enumerate}
\end{Corollary}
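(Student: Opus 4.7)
The proof plan is to reduce the depth-zero case to the positive-depth case covered by Theorem~\ref{Rossi4Buch}, applied to the quotient $\overline{\RR} = \RR/\H^0_\m(\RR)$. Let $W = \H^0_\m(\RR)$. Since $\RR$ is Buchsbaum, $\m W = 0$, so $W$ has finite length and $\overline{\RR}$ is a two-dimensional Buchsbaum local ring of positive depth. The natural map $\RR \to \SS = \Hom_\RR(\m, \m)$ of Theorem~\ref{S2ofBuch} factors through $\overline{\RR}$ (because each element of $W$ annihilates $\m$), and the induced map realizes $\SS$ as the $S_2$-fication of $\overline{\RR}$; hence Theorem~\ref{Rossi4Buch} can be applied to $\overline{\RR}, \overline{I}, \overline{Q}$.

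Three comparisons must then be carried out. First, by Artin--Rees combined with $\m W = 0$, for $n \gg 0$ one has $Q^{n+1} \cap W = 0$, so $\lambda(\RR/Q^{n+1}) = \lambda(\overline{\RR}/\overline{Q}^{n+1}) + \lambda(W)$; comparing Hilbert polynomials yields $\rme_0(\overline{Q}) = \rme_0(Q)$ and $\rme_1(\overline{Q}) = \rme_1(Q) + \lambda(W)$, in particular $\rme_1(\overline{Q}) \geq \rme_1(Q)$. Second, the analogous $I^{n+1} \cap W = 0$ for $n \gg 0$ shows that in those degrees
\[
 I^{n+1} \cap (IQ^{n} + W) = IQ^{n},
\]
so the natural graded surjection $S_Q(I) \twoheadrightarrow S_{\overline{Q}}(\overline{I})$ is an isomorphism in large degrees; consequently both modules share the same Hilbert polynomial and $\s_Q(I) = \s_{\overline{Q}}(\overline{I})$. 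Third, if $\overline{I}^{r+1} = \overline{Q}\,\overline{I}^{r}$ with $r = \red_{\overline{Q}}(\overline{I})$, then $I^{r+1} \subseteq QI^{r} + W$, and multiplying by $I \subseteq \m$ gives $I^{r+2} \subseteq QI^{r+1} + IW \subseteq QI^{r+1} + \m W = QI^{r+1}$, so $\red_Q(I) \leq \red_{\overline{Q}}(\overline{I}) + 1$.

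Assembling these, in case (1), Theorem~\ref{Rossi4Buch}(1) applied to $\overline{\RR}$ gives $\red_{\overline{Q}}(\overline{I}) \leq \s_{\overline{Q}}(\overline{I}) + 1$, whence $\red_Q(I) \leq \s_Q(I) + 2$. In case (2), Theorem~\ref{Rossi4Buch}(2) gives $\red_{\overline{Q}}(\overline{I}) \leq (1 - \rme_1(\overline{Q}))^2 \s_{\overline{Q}}(\overline{I}) - 2 \rme_1(\overline{Q}) + 1$; since $1 - \rme_1(\overline{Q}) \leq 1 - \rme_1(Q)$ (both being $\geq 1$) and $-\rme_1(\overline{Q}) \leq -\rme_1(Q)$, together with $\s_Q(I) = \s_{\overline{Q}}(\overline{I})$, the bound propagates to $\red_Q(I) \leq (1-\rme_1(Q))^2 \s_Q(I) - 2\rme_1(Q) + 2$, as claimed.

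The principal obstacle is verifying that $\SS$ is simultaneously the $S_2$-fication of $\RR$ and of $\overline{\RR}$, and handling the Sally-module multiplicity comparison when the dimension of $S_Q(I)$ could drop below $d=2$. Both are overcome by observing that the relevant invariants depend only on behavior in sufficiently high degrees, where the finite-length discrepancy caused by $W$ vanishes by Artin--Rees.
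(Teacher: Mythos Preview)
Your overall strategy---pass to $\overline{\RR}=\RR/\H^0_\m(\RR)$, apply Theorem~\ref{Rossi4Buch} there, and then compare reduction numbers and Sally multiplicities---is exactly the paper's approach. The reduction-number comparison $\red_Q(I)\le \red_{\overline Q}(\overline I)+1$ is argued just as in the paper, and your Sally-module comparison is in fact sharper: you prove $\s_Q(I)=\s_{\overline Q}(\overline I)$ via the large-degree isomorphism, whereas the paper only uses the surjection $S_Q(I)\twoheadrightarrow S_{\overline Q}(\overline I)$ to get $\s_Q(I)\ge \s_{\overline Q}(\overline I)$, which already suffices.

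There is, however, a slip in your Hilbert-coefficient computation. From $\lambda(\RR/Q^{n+1})=\lambda(\overline{\RR}/\overline{Q}^{\,n+1})+\lambda(W)$ for $n\gg 0$ in dimension~$2$, the extra constant $\lambda(W)$ is absorbed by $\rme_2$, not $\rme_1$: one gets $\rme_0(\overline Q)=\rme_0(Q)$, $\rme_1(\overline Q)=\rme_1(Q)$, and $\rme_2(\overline Q)=\rme_2(Q)-\lambda(W)$. So your claim $\rme_1(\overline Q)=\rme_1(Q)+\lambda(W)$ is incorrect. Fortunately this does not damage the argument: what case~(2) needs is $1-\rme_1(\overline Q)\le 1-\rme_1(Q)$ and $-\rme_1(\overline Q)\le -\rme_1(Q)$, and these hold with equality. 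With $\rme_1(\overline Q)=\rme_1(Q)$ the bound from Theorem~\ref{Rossi4Buch}(2) transfers to $\RR$ immediately, with no inequality juggling required. After correcting this line, your proof is complete and matches the paper's.
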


\begin{proof} Note that $\overline{\RR}$ is a two-dimensional Buchsbaum local  ring of positive depth.  Let $r = \red_{\overline{Q}}(\overline{I})$. Then
$I^{r+1} - QI^{r} \subset \H_{\m}^0(\RR)$, which implies that $I^{r+2}-QI^{r+1} \subset I \H_{\m}^0(\RR) =0$.
Thus, $\red_Q(I)\leq \red_{\overline{Q}}(\overline{I})+1$.
 Furthermore the surjective map $S_Q(I) \rightarrow S_{\overline{Q}}(\overline{I})$ gives
 $\s_{Q}(I) \geq  \s_{\overline{Q}}(\overline{I})$. 
 Now the conclusion follows from  Theorem \ref{Rossi4Buch} applied to $\overline{\RR}$.
\end{proof}

\begin{Corollary}
Let $(\RR, \m)$ be a two-dimensional generalized Cohen-Macaulay local 
ring. Let $\SS$ be the $S_2$-fication of $\RR$. Let $I$ be an $\m$-primary ideal with a minimal reduction $Q$. 
Suppose that  $\RR$ has positive depth. 
\begin{enumerate}
\item[{\rm (1)}] If $I=I\SS$, then $\s_{Q}(I) = \rme_{1}(I)- \rme_{0}(I) + \lambda(\RR/I) + \lambda(\H_{\m}^{1}(\RR))$ and $\red_{Q}(I) \leq \s_{Q}(I)+1$.
\item[{\rm (2)}] If $I \subsetneq I \SS$, then $\red_{Q}(I) \leq (1 + \lambda(\H_{\m}^{1}(\RR)))^{2} \s_{Q}(I) +  2\lambda(\H_{\m}^{1}(\RR)) +1$.
\end{enumerate}
Suppose that $\RR$ has depth zero.  Set $\overline{\RR}= \RR/\H_{\m}^0(\RR)$, $\overline{I} = I\overline{\RR}$, and $\overline{Q} = Q\overline{\RR}$.  
\begin{enumerate}
\item[{\rm (3)}]  If $\overline{I}=\overline{I}\SS$, then $\red_Q(I)\leq  \s_{Q}(I)+1+ \lambda(\H_{\m}^{0}(\RR)) $.
\item[{\rm (4)}]  If $\overline{I} \subsetneq \overline{I} \SS$, then $\red_{Q}(I) \leq (1 + \lambda(\H_{\m}^{1}(\RR)))^{2} \s_{Q}(I) +  2\lambda(\H_{\m}^{1}(\RR)) +1 + \lambda(\H_{\m}^{0}(\RR))$.
\end{enumerate}
\end{Corollary}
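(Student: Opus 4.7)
The plan is to adapt the proof of Theorem~\ref{Rossi4Buch} for parts (1) and (2), and the proof of Corollary~\ref{depth0} for parts (3) and (4). The Buchsbaum hypothesis enters Section~4 in exactly two places: the identity $-\rme_1(Q)=\lambda(\H_{\m}^1(\RR))$ (used when $I=I\SS$), and the vanishing $\m\cdot\H_{\m}^0(\RR)=0$ (used in the depth zero reduction). Each must be upgraded to work in the generalized Cohen--Macaulay setting, and both replacements are essentially immediate.

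For parts (1) and (2), I would repeat the argument of Theorem~\ref{Rossi4Buch} with a single substitution. The defining exact sequence $0\to\RR\to\SS\to D\to 0$ of the $S_2$-fication identifies $D\cong\H_{\m}^1(\RR)$, so $\lambda_{\RR}(\SS/\RR)=\lambda(\H_{\m}^1(\RR))$ without any Buchsbaum hypothesis. After reducing to the complete case and decomposing $\SS$ into its semi-local factors $\SS_i$ (each Cohen--Macaulay, since $(S_2)$ in dimension at most two forces Cohen--Macaulayness), Rossi's inequality applied componentwise yields $\red_{Q\SS}(I\SS)\leq\s_{Q\SS}(I\SS)+1$ exactly as in the Buchsbaum argument. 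When $I=I\SS$, the additivity of $\RR$-lengths over the $\SS_i$ together with $\lambda_{\RR}(\SS/I\SS)=\lambda(\RR/I)+\lambda(\H_{\m}^1(\RR))$ gives the claimed formula for $\s_Q(I)$, and (1) follows. For (2), Proposition~\ref{properties}(3) provides $\nu(\SS)\leq 1+\lambda(\H_{\m}^1(\RR))$, so combining \cite[Proposition~1.118]{icbook} with Theorem~\ref{changeSally}(3) gives
\[
\red_Q(I)\leq \nu(\SS)\cdot\red_{Q\SS}(I\SS)+\nu(\SS)-1\leq \nu(\SS)^2\,\s_Q(I)+2\nu(\SS)-1,
\]
and substituting $\nu(\SS)\leq 1+\lambda(\H_{\m}^1(\RR))$ produces the bound in (2).

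For parts (3) and (4), set $H=\H_{\m}^0(\RR)$, $\overline{\RR}=\RR/H$, $\overline{I}=I\overline{\RR}$, $\overline{Q}=Q\overline{\RR}$. Since $H$ has finite length, the long exact sequence of local cohomology associated to $0\to H\to\RR\to\overline{\RR}\to 0$ shows that $\overline{\RR}$ is two-dimensional generalized Cohen--Macaulay with positive depth and $\H_{\m}^1(\overline{\RR})\cong\H_{\m}^1(\RR)$; the same argument as in the Buchsbaum case identifies $\SS$ with the $S_2$-fication of $\overline{\RR}$, so (1) and (2) apply to $\overline{\RR}$ with the same $\SS$. The natural surjection $S_Q(I)\twoheadrightarrow S_{\overline{Q}}(\overline{I})$ yields $\s_{\overline{Q}}(\overline{I})\leq\s_Q(I)$, so the right-hand sides behave correctly.

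The one new ingredient, replacing the input $I\cdot H=0$ used in Corollary~\ref{depth0}, is the inequality $\red_Q(I)\leq\red_{\overline{Q}}(\overline{I})+\lambda(H)$. To establish it, consider the descending chain $H\supseteq IH\supseteq I^2H\supseteq\cdots$; any equality $I^kH=I^{k+1}H$ forces $I^kH=0$ by Nakayama's lemma (since $I\subseteq\m$), so each inclusion is strict until the chain reaches $0$, yielding $I^{\lambda(H)}H=0$. Given $r=\red_{\overline{Q}}(\overline{I})$, we have $I^{r+1}\subseteq QI^r+H$; multiplying by $I^{\lambda(H)}$ gives
\[
I^{r+1+\lambda(H)}\subseteq QI^{r+\lambda(H)}+I^{\lambda(H)}H=QI^{r+\lambda(H)},
\]
so $\red_Q(I)\leq r+\lambda(H)$. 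Combining this with (1) respectively (2) applied to $\overline{\RR}$ yields (3) and (4). The main obstacle is conceptual rather than technical: recognizing that Nakayama's lemma already quantifies the $I$-nilpotency of $H$ by $\lambda(H)$, which is exactly the substitute for $IH=0$ that is needed.
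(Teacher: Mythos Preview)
Your proof is correct and matches the paper's approach essentially verbatim: the paper's own proof simply states that one repeats the arguments of Theorem~\ref{Rossi4Buch} and Corollary~\ref{depth0} with the three substitutions $\lambda_{\RR}(\SS/\RR)=\lambda(\H_{\m}^1(\RR))$, $\nu(\SS)\leq 1+\lambda(\H_{\m}^1(\RR))$, and $I^{g}\H_{\m}^0(\RR)=0$ for $g=\lambda(\H_{\m}^0(\RR))$, which are exactly the replacements you identified and justified. Your Nakayama argument for $I^{\lambda(H)}H=0$ is the standard one, and the paper leaves it implicit.
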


\begin{proof} 
The proof is almost identical to those of Theorem~\ref{Rossi4Buch} and Corollary~\ref{depth0} except that $\lambda_{\RR}(\SS/ \RR) = \lambda(\H_{\m}^{1}(\RR))$, $\nu(\SS) \leq 1+ \lambda(\H_{\m}^{1}(\RR))$, and $I^{g} \H_{\m}^{0}(\RR)=0$, where $g= \lambda (\H_{\m}^{0}(\RR))$.
\end{proof}

%
%
%

\begin{Example}\label{example}{\rm (\cite[Example 2.8]{GO10}) Let $k$ be a field and let $\RR=k[[X,Y,Z,W]]/(X,Y)\cap (Z,W)$, where $X,Y,Z,W$ are variables. Then $\RR$ is a Buchsbaum ring of dimension 2, depth 1, and $\rme_1(Q)=-1$ for all parameter ideals $Q$. Let $x,y,z,w$ denote the images of $X,Y,Z,W$ in $\RR$.
Choose an integer $q >0$ and set $I=(x^4,x^3y,xy^3,y^4)+(z,w)^q$. 
For every $n\ge 0$ we have an exact sequence 
\[0 \to \RR/I^{n+1} \to k[[x,y,z,w]]/[(x,y) + I^{n+1}] \oplus k[[x,y,z,w]]/[(z,w) + I^{n+1}] \to k \to 0,\] 
which gives that $\lambda(\RR/I^{n+1}) = (q^2 + 16)\binom{n+2}{2} - \frac{q^2 -q + 12}{2}\binom{n+1}{1}-1$ for all $n \ge 1$ and $\lambda(\RR/I) = \frac{q^2+ q + 20}{2}$.
 Hence $\rme_0(I) = q^2 + 16, \rme_1(I) = \frac{q^2 - q +12}{2}$, so that $\rme_0(I) - \lambda(\RR/I) = \rme_1(I).$

Let $Q = (x^4 - z^q, y^4 - w^q)$. It is easy to see that $Q$ is a minimal reduction of $I$ with $\red_Q(I)=2$. 
By Theorem~\ref{Rossi4Buch}(1) we have that $\s_{Q}(I)=1$. Hence the bound of Theorem \ref{Rossi4Buch}(1) is sharp, $\red_Q(I)=\s_{Q}(I)+1$.
}
\end{Example}


\subsubsection*{Sally modules for Buchsbaum rings}
 The preceding section requires formulas for the multiplicity  $\s_{Q}(I)$ of the Sally module of $I$ relative to $Q$, at least in case $\RR$ is Buchsbaum.  Our aim here is to recall a technique of \cite[Proposition 2.8]{C09}.

\begin{Proposition}\label{Sallydsequence} Let $(\RR, \m)$ be a Noetherian local ring of dimension $d$, $I$ an $\m$-primary ideal and $Q$ one of 
its minimal reductions. Suppose that $\dim S_{Q}(I)=d$. Then
\[ \s_{Q}(I) \leq \rme_1(I) -\rme_1(Q) - \rme_0(I) + \lambda(\RR/I ).\] 
 \end{Proposition}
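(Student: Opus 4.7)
The plan is to identify $\s_{Q}(I)$ as the leading Hilbert coefficient of $S_Q(I)$ and read off the desired upper bound from the Hilbert--Samuel expansions attached to the chain $Q^{n+1}\subset IQ^n\subset I^{n+1}$. Since $\dim S_{Q}(I)=d$ and each graded piece $S_Q(I)_n=I^{n+1}/IQ^n$ has finite length (being annihilated by some $\m^{\ell}$), the Hilbert function $h_S(n):=\lambda(I^{n+1}/IQ^n)$ is a polynomial of degree $d-1$ for $n\gg 0$ with leading coefficient $\s_{Q}(I)/(d-1)!$. Using the two inclusions above, I would rewrite
\[
h_S(n)=\lambda(\RR/IQ^n)-\lambda(\RR/I^{n+1})=\lambda(\RR/Q^{n+1})-\lambda(IQ^n/Q^{n+1})-\lambda(\RR/I^{n+1}).
\]

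Next I would substitute the Hilbert--Samuel expansion $\lambda(\RR/I^{n+1})=\rme_0(I)\binom{n+d}{d}-\rme_1(I)\binom{n+d-1}{d-1}+O(n^{d-2})$ together with the analogous expansion for $Q$. Because $\rme_0(I)=\rme_0(Q)$, the $\binom{n+d}{d}$-terms in $\lambda(\RR/Q^{n+1})-\lambda(\RR/I^{n+1})$ cancel and the $\binom{n+d-1}{d-1}$-coefficient becomes exactly $\rme_1(I)-\rme_1(Q)$, so
\[
h_S(n)=(\rme_1(I)-\rme_1(Q))\binom{n+d-1}{d-1}-\lambda(IQ^n/Q^{n+1})+O(n^{d-2}).
\]
The sought bound on $\s_{Q}(I)$ now reduces to an asymptotic lower bound on $\lambda(IQ^n/Q^{n+1})$, or equivalently an asymptotic upper bound on $\lambda(Q^n/IQ^n)=\lambda(Q^n/Q^{n+1})-\lambda(IQ^n/Q^{n+1})$.

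The crux is the elementary estimate $\lambda(Q^n/IQ^n)\leq\lambda(\RR/I)\binom{n+d-1}{d-1}$, valid for every $n\geq 0$. To see it, note that since $Q=(a_1,\ldots,a_d)$ the ideal $Q^n$ is spanned as an $\RR$-module by the $\binom{n+d-1}{d-1}$ monomials of degree $n$ in $a_1,\ldots,a_d$, producing a surjection $(\RR/I)^{\binom{n+d-1}{d-1}}\twoheadrightarrow Q^n/IQ^n$; more intrinsically, the polynomial ring $(\RR/I)[T_1,\ldots,T_d]$ surjects via $T_i\mapsto a_i$ onto the standard graded algebra $\gr_Q(\RR)\otimes_{\RR}\RR/I$, whose degree $n$ piece is $Q^n/IQ^n$. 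Combined with $\lambda(Q^n/Q^{n+1})=\rme_0(Q)\binom{n+d-1}{d-1}+O(n^{d-2})$, this yields $\lambda(IQ^n/Q^{n+1})\geq(\rme_0(Q)-\lambda(\RR/I))\binom{n+d-1}{d-1}+O(n^{d-2})$, and substituting back with $\rme_0(Q)=\rme_0(I)$ delivers $\s_{Q}(I)\leq\rme_1(I)-\rme_1(Q)-\rme_0(I)+\lambda(\RR/I)$. The principal difficulty is purely book-keeping in the expansions---verifying that the inequality is oriented correctly at the $\binom{n+d-1}{d-1}$-coefficient, and that the hypothesis $\dim S_Q(I)=d$ places $\s_Q(I)$ precisely at this coefficient; no Cohen--Macaulay assumption is invoked, since the argument only manipulates leading Hilbert coefficients together with a formal polynomial-ring surjection, recovering the equality $\s_Q(I)=\rme_1(I)-\rme_0(I)+\lambda(\RR/I)$ of Theorem~\ref{fiberofSally}(2) when $\rme_1(Q)=0$.
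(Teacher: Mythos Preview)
Your argument is correct and follows essentially the same route as the paper: decompose $\lambda(I^{n+1}/IQ^n)$ using the chain $Q^{n+1}\subset IQ^n\subset I^{n+1}$, expand the Hilbert--Samuel polynomials of $I$ and $Q$, and bound the contribution of $Q^n/IQ^n$ via the polynomial-ring surjection $(\RR/I)[T_1,\dots,T_d]\twoheadrightarrow \gr_Q(\RR)\otimes\RR/I$. The paper organises the bookkeeping slightly differently---it pivots around $Q^n$ rather than $Q^{n+1}$ and records the intermediate \emph{equality} $\s_Q(I)=\rme_1(I)-\rme_0(I)-\rme_1(Q)+\rme_0(\overline{G})$ with $\overline{G}=\gr_Q(\RR)\otimes\RR/I$ before bounding $\rme_0(\overline{G})\le\lambda(\RR/I)$---but the substance is identical, and your direct use of the surjection is arguably cleaner than the paper's chain through $\rme_0(\gr_Q(\RR))$.
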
  

\begin{proof}
Let $S_{Q}(I)= \bigoplus_{n\geq 1} I^{n+1}/I Q^{n}$ be the Sally module of $I$ relative to $Q$.  From the exact sequences
\[ \begin{array}{rcccccccc}
 0  & \rar & I^{n+1}/IQ^{n} & \lar & \RR/IQ^{n} & \lar & \RR/I^{n+1} & \rar & 0 \\  &&&&&&&&\\
 0  & \rar & Q^{n}/IQ^{n} & \lar & \RR/IQ^{n} & \lar & \RR/Q^{n} & \rar & 0 \\
\end{array}\]
we obtain that \[ \lambda(I^{n+1}/I Q^{n} )= - \lambda(\RR/I^{n+1}) + \lambda(\RR/Q^{n}) +\lambda(Q^{n}/I Q^{n}).\]
For $n \gg 0$, we have
\[ \begin{array}{rcl}
{\ds  \lambda(I^{n+1}/I Q^{n}) }&=  & {\ds \s_{Q}(I){{n+d-1}\choose{d-1}}+{\rm lower \ degree \ terms,}} \\ && \\
\lambda(\RR/I^{n+1}) &= & {\ds \rme_0(I){{n+d}\choose{d}}-\rme_1(I){{n+d-1}\choose{d-1}}+{\rm lower \ degree \ terms}.}
\end{array}
\] Since $\rme_0(Q)=\rme_0(I)$, for $n \gg0$ we have
\[ \begin{array}{rcl}
{\ds \lambda(\RR/Q^{n}) } &=& {\ds \rme_0(Q){{n+d-1}\choose{d}}-\rme_1(Q){{n+d-2}\choose{d-1}}+{\rm lower \ degree \ terms}} \\ && \\
 &=& {\ds \rme_{0}(Q) \left( {{n+d}\choose{d}} - {{n+d-1}\choose{d-1}} \right) - \rme_{1}(Q) \left( {{n+d-1}\choose{d-1}} - {{n+d-2}\choose{d-2}}  \right) +  \cdots    } \\  && \\
 &=& {\ds \rme_{0}(I) {{n+d}\choose{d}} - \rme_{0}(I) {{n+d-1}\choose{d-1}}  - \rme_{1}(Q) {{n+d-1}\choose{d-1}} +{\rm lower \ degree \ terms.}  } \\
\end{array} \]
Let $\overline{G}=\gr_Q(\RR)\otimes \RR/I$. Then for $n \gg0$ we have
\[ \lambda(Q^{n}/I Q^{n})=\rme_0(\overline{G}){{n+d-1}\choose{d-1}}+{\rm lower \ degree \ terms}. \]
It follows that 
\[ \s_{Q}(I)=\rme_1(I)  - \rme_0(I) -\rme_1(Q) + \rme_0(\overline{G}).\]
 Since
\[\rme_0(\overline{G})\leq \rme_0(\gr_Q(\RR))=\rme_0(Q)=\rme_0(I)\leq \lambda(\RR/I),\]
we obtain the desired upper bound for $\s_{Q}(I)$. 
\end{proof}

The following was proved in \cite[Proposition 2.5 (1)(i)]{O13} but we give a slightly different proof for the result, which uses the approximation complex.

\begin{Corollary}
Let $(\RR, \m)$ be a Noetherian local ring of dimension $d$, $I$ an $\m$-primary ideal and $Q$ one of  its minimal reductions.
Suppose that $\dim S_Q(I)=d$. Let $\varphi$ be the matrix of syzygies of $Q$.
If $Q$ is generated by a d-sequence and $I_1(\varphi)\subset I$, then
\[ \s_{Q}(I) = \rme_1(I) -\rme_1(Q) - \rme_0(I) + \lambda(\RR/I ).\] 
\end{Corollary}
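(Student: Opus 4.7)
The plan is to build on Proposition~\ref{Sallydsequence}, where it was shown that
\[ \s_{Q}(I) = \rme_1(I) - \rme_1(Q) - \rme_0(I) + \rme_0(\overline{G}), \qquad \overline{G}=\gr_Q(\RR)\otimes \RR/I, \]
and the stated inequality came from the bound $\rme_0(\overline{G})\leq \lambda(\RR/I)$ coming from the chain $\rme_0(\overline{G})\leq \rme_0(\gr_Q(\RR))=\rme_0(I)\leq \lambda(\RR/I)$. So to upgrade the inequality to an equality, it suffices to show that under the hypotheses ``$Q$ is generated by a $d$-sequence and $I_1(\varphi)\subset I$'' one has the sharp value $\rme_0(\overline{G})=\lambda(\RR/I)$.

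To get that sharp value, I would prove the stronger statement that $\overline{G}$ is isomorphic to the polynomial ring $(\RR/I)[\TT_1,\ldots,\TT_d]$. The key input is that when $Q=(a_1,\ldots,a_d)$ is generated by a $d$-sequence, the canonical map from the symmetric algebra $\Sym(Q)$ onto the Rees algebra $\RR[Q\TT]$ is an isomorphism; this is exactly what the approximation complex $\mathcal{Z}(Q)$ computes (this is the same machinery invoked just before Proposition~\ref{Sallydsequence2}). Consequently
\[ \RR[Q\TT] \;\cong\; \RR[\TT_1,\ldots,\TT_d]/L, \]
where $L$ is the ideal generated by the linear forms obtained from the syzygies of $Q$; concretely, if $\varphi$ is the matrix of syzygies then the generators of $L$ have coefficients drawn from $I_1(\varphi)$.

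Now I would reduce modulo $Q$ and then modulo $I$. Because $Q\subset I$, tensoring with $\RR/I$ kills the degree $0$ contribution of $Q$, giving
\[ \overline{G} \;=\; \gr_Q(\RR)\otimes \RR/I \;\cong\; \RR[\TT_1,\ldots,\TT_d]/(L+I\RR[\TT_1,\ldots,\TT_d]). \]
The hypothesis $I_1(\varphi)\subset I$ forces $L\subset I\RR[\TT_1,\ldots,\TT_d]$, so the presentation collapses to
\[ \overline{G} \;\cong\; (\RR/I)[\TT_1,\ldots,\TT_d]. \]
Since $\RR/I$ has finite length, this is a standard graded module of Krull dimension $d$ and its degree $n$ component has length $\lambda(\RR/I)\binom{n+d-1}{d-1}$; hence $\rme_0(\overline{G})=\lambda(\RR/I)$.

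Plugging this back into the formula from Proposition~\ref{Sallydsequence} yields exactly the claimed equality. The only nontrivial step is the identification $\Sym(Q)\cong \RR[Q\TT]$ under the $d$-sequence hypothesis, but this is a well-known consequence of the acyclicity of the approximation complex and has already been invoked in this paper; once this is in hand the rest is a purely formal bookkeeping computation on symmetric algebras.
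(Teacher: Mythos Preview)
Your proposal is correct and follows essentially the same route as the paper: both reduce the problem, via the identity $\s_{Q}(I) = \rme_1(I) - \rme_0(I) - \rme_1(Q) + \rme_0(\overline{G})$ from Proposition~\ref{Sallydsequence}, to showing that $\overline{G}\cong (\RR/I)[\TT_1,\ldots,\TT_d]$, and both use the $d$-sequence hypothesis together with $I_1(\varphi)\subset I$ to kill the syzygy relations modulo $I$. The only cosmetic difference is that the paper works directly with the $\mathcal{M}$-approximation complex presenting $\gr_Q(\RR)$ (tensoring the exact tail $H_1(Q)\otimes\BB[-1]\to H_0(Q)\otimes\BB\to\gr_Q(\RR)\to 0$ by $\RR/I$ and observing that the first map becomes zero), whereas you pass through the Rees algebra via the linear-type property $\Sym(Q)\cong\RR[Q\TT]$ and then reduce modulo $I$; since $Q\subset I$, the two computations are formally equivalent.
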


\begin{proof}
Let $\overline{G}=\gr_Q(\RR)\otimes \RR/I$. By the proof of Theorem~\ref{Sallydsequence}, we have
\[ \s_{Q}(I)=\rme_1(I)  - \rme_0(I) -\rme_1(Q) + \rme_0(\overline{G}).\]
Set $\BB=\RR[T_{1}, \ldots, T_{d}]$. Since $Q$ is generated by a $d$-sequence, the approximation complex  
 \[ 0\rar \H_d(Q)\otimes \BB[-d] \rar \cdots \rar \H_1(Q)\otimes \BB[-1] \rar  \H_0(Q)\otimes \BB\rar \gr_Q(\RR) \rar 0  \]
is acyclic (\cite[Theorem 5.6]{HSV3}).
By tensoring this complex by $\RR/I$, we get the exact 
sequence
\[  \H_1(Q) \otimes \RR/I \otimes \BB[-1] \stackrel{\phi}{\lar} \RR/I \otimes \BB
 \lar \overline{G} \rar 0.\]
 Since $I_1(\varphi)\subset I$,  the mapping $\phi$ is trivial so that $\rme_0(\overline{G})=\lambda(\RR/I)$.
\end{proof}


\end{document}